\newtheorem{theorem}{Theorem}[section]
\newtheorem{lemma}[theorem]{Lemma}
\newtheorem{corollary}[theorem]{Corollary}
\newtheorem{proposition}[theorem]{Proposition}
\newtheorem{definition}[theorem]{Definition}
\newtheorem{remark}{Remark}
\newcommand{\Z}{\mathbb{Z}}
\begin{document}
	
	\title[]{Singularly cospectral circulant graphs}
	
	\author[C.M. Conde]{Cristian M. Conde${}^{1,2}$}
	\author[E. Dratman]{Ezequiel Dratman${}^{1,2}$}
	\author[L.N. Grippo]{Luciano N. Grippo${}^{1,2}$}
	\author[M. Privitelli]{Melina Privitelli${}^{2,3}$}
	
	\address{${}^{1}$Universidad Nacional de General Sarmiento. Instituto de Ciencias, Argentina\\}
	\address{${}^{2}$Consejo Nacional de Investigaciones Cient\'ificas y Tecnicas, Argentina}
    \address{${}^{3}$Universidad Nacional de Hurlingham. Instituto de tecnolog\'ia e ingenier\'ia, Argentina}

	\email{cconde@campus.ungs.edu.ar}
	\email{edratman@campus.ungs.edu.ar}
	\email{lgrippo@campus.ungs.edu.ar}
	\email{mprivite@campus.ungs.edu.ar}

	\keywords{Circulant graphs, cospectral graphs, singularly cospectral graphs}
	\subjclass[2020]{05C25, 05C50}
	
	\date{}

\begin{abstract}
		Two graphs having the same spectrum are said to be cospectral. Two graphs such that the absolute values of their nonzero eigenvalues coincide are singularly cospectral graphs. Cospectrality implies singular cospectrality, but the converse may be false. In this paper, we present sufficient conditions for two circulant graphs, with an even number of vertices, to be noncospectral singularly cospectral graphs. In this analysis, we study when a pair of these graphs have the same or distinct inertia. In addition, we show that two singularly cospectral circulant graphs with an odd prime number of vertices are isomorphic.
\end{abstract}

\maketitle

\section{Introduction}\label{s1}

	Let $\mathbb{Z}_n$ denote the additive group of integers modulo $n$, and let $S$ be a subset of $\mathbb{Z}_n\setminus\{0\}$ closed with respect to taking the additive inverse; i.e., $S=-S$. A graph $G=G(\Z_n,S)$ is \emph{circulant of order $n$} if it can be constructed as follows. The vertices of $G$ are the elements of $\Z_n$, and $ij$ is an edge of $G$ if and only if $j-i\in S$. Notice that the adjacency matrix $A_G$, of a circulant graph $G$, is a circulant matrix for some order of its vertices. Because of their many applications,  circulant graphs have been widely studied (see~\cite{Mankhova2012} and the references therein). In connection with the spectra of circulant graphs, in 2006,  So characterized those circulant graphs with integral eigenvalues~\cite{SO2006153} and conjectured that integral circulant graphs are isomorphic if and only if they are cospectral. Sander and Sander proved this conjecture in 2015~\cite{SanderandSander2015}.  Recently, the first three authors presented a necessary and sufficient condition for two graphs to be singularly cospectral~\cite{CondeLAMA} (i.e. the absolute values of their nonzero eigenvalues coincide). In the same article, they presented constructions of families of pairs of nonisomorphic singularly cospectral graphs and graph classes where singular cospectrality implies almost cospectrality. 
	
	The energy of a graph was defined by Gutman in 1978 as the sum of its singular values counted with their multiplicity~\cite{Gutman2001}. In 2005, Stevanovi\'c and Stankovi\'c proved that almost all circulant graphs are hyperenergetic~\cite{STEVANOVIC2005}, i.e, that their energies are greater than two times the number of vertices minus one.	 Blackburn and  Shparlinski, in 2008, found a lower and upper bound for the average energy of circulant graphs~\cite{Blackburn2008}. In this article, we focus on finding families of pairs of noncospectral singularly cospectral circulant graphs, with an even number of vertices. For an odd prime number of vertices, we prove that there is no pair of nonisomorphic singularly cospectral circulant graphs. 
	
	The remainder of the article is organized as follows. In section \ref{s2}, we introduce some definitions and notations, and we also present some preliminary results. Section \ref{sec: distinct-same-inertia} is devoted to giving sufficient conditions for two circulant graphs, with an even number of vertices, to have distinct (the same) inertia. In section \ref{sec: same-inertia}, we present sufficient conditions for two circulant graphs having or not having the same inertia, with an even number of vertices, to be noncospectral singularly cospectral graphs. Finally, in section \ref{sec: prime-case} we show that for two circulant graphs with an odd prime number of vertices, to be singularly cospectral implies to be isomorphic.
	
\section{Preliminaries}\label{s2}

All graphs mentioned in this article are finite and have no loops nor multiple edges. Let $G$ be a graph. We use $V(G)$ and $E(G)$ to denote the set of $n=|V(G)|$ vertices and the set of $m=|E(G)|$ edges of $G$, respectively. 
The adjacency matrix associated with the graph $G$ is defined by
$A_G= (a_{ij})_{n\times n}$, where $a_{ij} = 1$ if  the vertex $v_i$ is adjacent to the vertex
$v_j$ and $a_{ij} = 0$ otherwise.  We often denote $A_G$ simply by $A.$ We use $d_G(v)$ to denote the degree of $v$ in $G$ (the number of edges incident to $v$), or $d(v)$ provided the context is clear.  A $d$-regular graph is a graph such that $d_G(v)=d$ for every $v\in V(G)$.

The inertia of $G$ is the ordered triple $(\pi(G), \nu(G), \delta(G))$,
where $\pi(G)$ is the number of positive eigenvalues of $A_G$, $\nu(G)$ is the number of
negative eigenvalues of $A_G$, and $\delta(G)$ is the multiplicity of $0$ as an eigenvalue of $A_G$.

Let $a_h$, $1\leq h \leq s$, be integers such that $1\leq a_1<a_2 \cdots <a_s\leq \frac{n-1}{2}$.  If $G(\mathbb{Z}_n,S)$ is a circulant graph with $S:=\{a_1,\ldots,a_s,n-a_s,\ldots,n-a_1\}$, then it is well-known that its eigenvalues satisfy 
	\begin{equation} \label{formula autovalores}
	\lambda_j^{(s)}=\sum_{h=0}^{n-1} \alpha_{0h}\omega^{hj}=\sum_{h=1}^s(\omega^{ja_h}+\omega^{(n-a_h)j}),\,\,
	\end{equation}
for any $0\leq j \leq n-1$ where  $\omega=\exp(\frac{2\pi i}{n})$  and $(\alpha_{00}, \alpha_{01}, \dots, \alpha_{0(n-1)})$ is the first row of an adjacency circulant  matrix $A_G$ (see, e.g.,~\cite{biggs93}).

The next lemma  exhibits an infinite family of pairs
of singularly cospectral circulant graphs with an even number of vertices.

\begin{lemma}\label{lem: SC n=2k}
	Let $G_1(\Z_n,S_1)$ and $G_2(\Z_n,S_2)$ be two circulant graphs with $n=2k$. If $S_1=\{a_1,a_2,\ldots,a_s,n-a_s, \ldots,n-a_2,n-a_1\}$ with $1\le a_1<a_2<\cdots<a_s\le k-1$ and $S_2=\{b_1,b_2,\ldots,b_s,n-b_s, \ldots, n-b_2, n-b_1\}$ with $b_i=k-a_{s-i+1}$ for each $1\le i\le s$, then $G_1(\Z_n,S_1)$ and $G_2(\Z_n,S_2)$ are singularly cospectral circulant graphs.
\end{lemma}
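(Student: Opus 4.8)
The plan is to compute the eigenvalues of both graphs directly from \eqref{formula autovalores}, reduce them to real cosine sums, and then exhibit an index-preserving correspondence that alters each eigenvalue only by a sign. Since $\omega^{(n-a_h)j}=\omega^{nj}\omega^{-a_hj}=\omega^{-a_hj}$, formula \eqref{formula autovalores} applied to $G_1$ gives
\begin{equation*}
\lambda_j^{(s)}(G_1)=\sum_{h=1}^s\bigl(\omega^{ja_h}+\omega^{-ja_h}\bigr)=\sum_{h=1}^s 2\cos\!\Bigl(\tfrac{2\pi a_h j}{n}\Bigr),\qquad 0\le j\le n-1,
\end{equation*}
and an entirely analogous expression holds for $G_2$ with the $a_h$ replaced by the $b_i$. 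Because $i\mapsto s-i+1$ is a permutation of $\{1,\dots,s\}$, the relation $b_i=k-a_{s-i+1}$ shows that the multiset $\{b_i:1\le i\le s\}$ coincides with $\{k-a_h:1\le h\le s\}$, so I may rewrite $\lambda_j^{(s)}(G_2)=\sum_{h=1}^s 2\cos\!\bigl(\tfrac{2\pi(k-a_h)j}{n}\bigr)$ without reindexing the outer sum.

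The crux of the argument is the hypothesis $n=2k$, which I would exploit as follows. Writing $k=n/2$, the argument of each cosine in $\lambda_j^{(s)}(G_2)$ becomes $\tfrac{2\pi(k-a_h)j}{n}=\pi j-\tfrac{2\pi a_h j}{n}$, and the elementary identity $\cos(\pi j-x)=(-1)^j\cos x$ (valid because $\sin(\pi j)=0$ and $\cos(\pi j)=(-1)^j$) then yields, termwise,
\begin{equation*}
\cos\!\Bigl(\tfrac{2\pi(k-a_h)j}{n}\Bigr)=(-1)^j\cos\!\Bigl(\tfrac{2\pi a_h j}{n}\Bigr).
\end{equation*}
Summing over $h$ gives the clean identity $\lambda_j^{(s)}(G_2)=(-1)^j\,\lambda_j^{(s)}(G_1)$ for every $0\le j\le n-1$.

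From this relation the conclusion is immediate: taking absolute values, $\bigl|\lambda_j^{(s)}(G_2)\bigr|=\bigl|\lambda_j^{(s)}(G_1)\bigr|$ for each index $j$, so the identity map $j\mapsto j$ is a bijection between the spectra matching eigenvalues of equal absolute value. In particular the nonzero eigenvalues of the two graphs have identical absolute values with identical multiplicities (and, since $\lambda_j^{(s)}(G_1)=0$ if and only if $\lambda_j^{(s)}(G_2)=0$, the zero eigenvalues correspond as well), which is exactly singular cospectrality. I do not anticipate a genuine obstacle here; the only delicate point is the bookkeeping, namely confirming that the reindexing $b_i=k-a_{s-i+1}$ merely permutes the inner summands so that the sign factor $(-1)^j$ factors out uniformly, and that the parity of $n$ ($n=2k$) is precisely what turns $\tfrac{2\pi k j}{n}$ into the integer multiple $\pi j$ that produces the clean sign.
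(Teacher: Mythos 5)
Your proof is correct and follows essentially the same route as the paper: both establish the termwise identity $\beta_j^{(s)}=(-1)^j\lambda_j^{(s)}$, the paper by factoring $\omega^{kj}=(-1)^j$ out of the exponential sum and you by the equivalent real-variable identity $\cos(\pi j - x)=(-1)^j\cos x$ applied to the cosine form of the eigenvalues. The conclusion of singular cospectrality via the index-preserving matching of absolute values is exactly the paper's argument.
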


\begin{proof}
	On  one hand, the eigenvalues of $G_1$ are
	\begin{equation*}
	\begin{split}
	\lambda_j^{(s)}&=\sum_{h=1}^s \omega^{a_hj}+\sum_{h=1}^s \omega^{(n-a_h)j}=\sum_{h=1}^s (\omega^{a_hj}+\omega^{-a_hj}),
	\end{split}
	\end{equation*}
	for every $0\le j\le n-1$. On the other hand, the eigenvalues of $G_2$ are
	\begin{equation}\label{eq:beta igual menos lambda}
	\begin{split}
	\beta_j^{(s)}&=\sum_{h=1}^s \omega^{b_hj}+\sum_{h=1}^s \omega^{(n-b_h)j}\\	
	&=\omega^{kj}\sum_{h=1}^s(\omega^{a_hj}+\omega^{-a_hj})=(-1)^j\sum_{h=1}^s(\omega^{a_hj}+\omega^{-a_hj})=(-1)^j\lambda_j^{(s)},
	\end{split}
	\end{equation}
	for every $0\le j\le n-1$. Therefore, we conclude that $G_1$ and $G_2$ are singularly cospectral.
\end{proof}

The eigenvalues of circulants graphs can be expressed in terms of the Chebyshev polynomials of the first and second kind. We introduce the definitions and different results on the Chebyshev polynomials which  play an
important role in our analysis.
\begin{definition}  Let $n$ be a non-negative integer. The Chebyshev polynomials of first kind $T_n$ are defined as $T_n(\cos(\theta))=\cos(n\theta)$.
	Similarly, the Chebyshev polynomials of the second kind $U_n$ are given by $U_n(\cos(\theta))\sin(\theta)=\sin((n+1)\theta)$.
\end{definition}
The results about Chebyshev polynomials of the first and second kind collected in the following
lemma are well-known (see ~\cite{MasonHandscomb}).
\begin{lemma}\label{lemma: prop polinomios de Chebyshev}
	The polynomials $T_n(X)$ and $U_n(X)$ satisfy the following properties, where 
	$X=\cos(\theta)$.
	\begin{enumerate}
		\item The polynomials $T_n$ can be defined through the following recurrence relation: $T_0(X)=1$, $T_1(X)=X$ and  $T_{n+1}=2XT_n-T_{n-1}$.
		\item If $k$ is an odd number, then $T_{\frac{k-1}{2}-l}(\cos(\frac{(2j+1)\pi}{k}))+T_{\frac{k+1}{2}+l}(\cos(\frac{(2j+1)\pi}{k}))=0$ for $l\geq 0$ and $j\in \mathbb{Z}.$
		\item If $k$ is an even number, then $T_{\frac{k}{2}-l}(\cos(\frac{(2j+1)\pi}{k}))+T_{\frac{k}{2}+l}(\cos(\frac{(2j+1)\pi}{k}))=0$ for $l\geq 0$ and $j\in \mathbb{Z}.$
		\item The polynomials $U_n(X)$ can be defined by the following recurrence relations: $U_0(X)=1$, $U_1(X)=2X$ and $U_{n+1}(X)=2XU_n(X)-U_{n-1}(X)$.
		\item $U_n(X)=\frac{\sin((n+1)\theta)}{\sin(\theta)}$.
		\item  $T_n(X)=\frac{1}{2}(U_n(X)-U_{n-2}(X))$.
		\item $U_n^{'}(X)=\frac{(n+1)T_{n+1}(X)-XU_n(X)}{X^2-1}. $
	\end{enumerate}
\end{lemma}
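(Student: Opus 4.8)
The plan is to derive every property directly from the trigonometric definitions $T_n(\cos\theta)=\cos(n\theta)$ and $U_n(\cos\theta)\sin\theta=\sin((n+1)\theta)$, using only elementary product-to-sum and sum-to-product identities. Since each claimed identity is an equality of polynomials in $X$, and the substitution $X=\cos\theta$ with $\theta\in[0,\pi]$ already sweeps out the whole interval $[-1,1]$, verifying an identity for all such $\theta$ establishes it at infinitely many values of $X$ and hence as a polynomial identity.

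For the recurrences (1) and (4), I would apply $\cos((n+1)\theta)+\cos((n-1)\theta)=2\cos\theta\cos(n\theta)$ and $\sin((n+2)\theta)+\sin(n\theta)=2\cos\theta\sin((n+1)\theta)$ respectively, reading off the initial values from $\cos 0=1$, $\cos\theta=\cos\theta$ and the analogous computations for $U_0,U_1$. Property (5) is just the defining relation rewritten for $\sin\theta\neq 0$. For (6), I would compute $U_n(\cos\theta)-U_{n-2}(\cos\theta)=\frac{\sin((n+1)\theta)-\sin((n-1)\theta)}{\sin\theta}$ and collapse the numerator via $\sin A-\sin B=2\cos(\frac{A+B}{2})\sin(\frac{A-B}{2})$ to obtain $2\cos(n\theta)=2T_n(\cos\theta)$.

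For (2) and (3) I would set $\theta=\frac{(2j+1)\pi}{k}$ and use $\cos A+\cos B=2\cos(\frac{A+B}{2})\cos(\frac{A-B}{2})$. In both cases the two indices sum to $k$, so the half-sum angle is $\frac{k\theta}{2}=\frac{(2j+1)\pi}{2}$, an odd multiple of $\frac{\pi}{2}$; hence $\cos(\frac{k\theta}{2})=0$ and the entire expression vanishes, uniformly in $l$ and $j$.

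The one genuinely different, and hence the only delicate, step is (7), where differentiation enters. I would start from (5), write $\frac{d}{dX}=\frac{-1}{\sin\theta}\frac{d}{d\theta}$ coming from $X=\cos\theta$, and apply the quotient rule to $\frac{\sin((n+1)\theta)}{\sin\theta}$. The resulting expression must then be translated back into polynomials in $X$ using $\cos((n+1)\theta)=T_{n+1}(X)$, $\sin((n+1)\theta)=U_n(X)\sin\theta$, $\cos\theta=X$ and $\sin^2\theta=1-X^2$; keeping careful track of the sign introduced by $\frac{dX}{d\theta}=-\sin\theta$ is where errors are most likely, and it is precisely this bookkeeping that produces the denominator $X^2-1$ rather than $1-X^2$. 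The main obstacle throughout is thus not conceptual but the careful handling of the substitution and the sign in (7).
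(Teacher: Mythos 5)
Your proposal is correct, and it is worth noting that the paper itself gives no proof at all: Lemma~2.2 is stated as a collection of well-known facts with a citation to Mason--Handscomb, so your elementary derivation from the defining relations $T_n(\cos\theta)=\cos(n\theta)$ and $U_n(\cos\theta)\sin\theta=\sin((n+1)\theta)$ is a genuine supplement rather than a parallel of an argument in the text. Your handling of each item is sound: the product-to-sum identities give (1) and (4); (5) is definitional; the sum-to-product collapse gives (6); and in (2) and (3) your observation that the two indices always sum to $k$, so that the half-sum angle is $\frac{(2j+1)\pi}{2}$ and its cosine vanishes, is exactly the right mechanism (note that $T_m(\cos\varphi)=\cos(m\varphi)$ persists for all real $\varphi$, and under the convention $T_{-m}=T_m$ even when $l$ makes an index negative, so the claim is indeed uniform in $j$ and $l$). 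For (7), carrying out your chain-rule computation $\frac{d}{dX}=-\frac{1}{\sin\theta}\frac{d}{d\theta}$ on $\frac{\sin((n+1)\theta)}{\sin\theta}$ yields numerator $\sin\theta\left(XU_n(X)-(n+1)T_{n+1}(X)\right)$ over $\sin^3\theta$, and the two sign flips (from $\frac{dX}{d\theta}=-\sin\theta$ and from rewriting $1-X^2$ as $-(X^2-1)$) combine to give precisely the stated denominator $X^2-1$; your warning that this bookkeeping is the delicate point is apt. Two small matters you could make explicit: in (6) with $n\le 1$ one needs the conventions $U_{-1}=0$ and $U_{-2}=-1$, which your trigonometric expressions deliver automatically (the paper itself later sets $U_{-1}:=0$); and (7) is first verified as the polynomial identity $(X^2-1)U_n'(X)=(n+1)T_{n+1}(X)-XU_n(X)$ at infinitely many points, then divided through where $X\neq\pm1$ --- your density-of-values argument covers this, but stating it in that order avoids any worry about the singular denominator.
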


We now provide  expressions for the eigenvalues of $G(\mathbb{Z}_n,S)$
by means of Chebyshev polynomials of the first and second kind.
We may immediately deduce it  from Lemma \ref{lemma: prop polinomios de Chebyshev}. 

\begin{lemma} \label{lemma: autovalores dados por Chebyshev}
	Let $G(\Z_n,S)$ be a circulant graph. If $S=\{a_1,\ldots,a_s,n-a_s,\ldots,n-a_1\}$ with $1\le a_1<a_2<\cdots<a_s\le \frac{n-1}{2}$, then
	\begin{enumerate}
		\item \label{lambdaChebyshev} Let  $X_j=\cos(j\frac{2\pi}{n})$ for $0\le j \le n-1$, then  \begin{eqnarray*}
		\lambda_{j}^{(s)}=2\sum_{h=1}^s T_{a_h}(X_j).
		\end{eqnarray*}
		If $a_{i+1}=a_i+1$ for $1\le i\le s-1$, then
		\begin{eqnarray*}
		\lambda_{j}^{(s)}&=& U_{a_s}(X_j)+U_{a_{s}-1}(X_j)-U_{a_1 - 1}(X_j)-U_{a_{1}-2}(X_j) \label{eq: lambda dados por U}\\
		&=& U_{2a_s}(Y_j)-U_{2(a_1-1)}(Y_j), \nonumber
		\end{eqnarray*} 
		with $Y_j=\cos(j\frac{\pi}{n})$ and $U_{-1}(X_j):=0$.
		
		\item\label{beta igual lambda} Let $\hat{G}(\Z_n,\hat{S})$ be a circulant graph with $n=2k$. If $\hat{S}=\{b_1,b_2,\ldots,b_s,n-b_s, \ldots, n-b_2, n-b_1\}$ with $b_i=k-a_{s-i+1}$ for each $1\le i\le s$, then $\beta_j^{(s)}=(-1)^j\lambda_{j}^{(s)}$ for $0\leq j \leq n-1$, where $\beta_j^{(s)}$ denote the eigenvalues of $\hat{G}$. In particular, if $j $ is an even number we have that 
		$\beta_j^{(s)}=\lambda_{j}^{(s)}.$
		
		\item Under the assumption of item \eqref{beta igual lambda}, we have $\lambda_0^{(s)} =\beta_0^{(s)}=2s.$

		\item\label{simetria lambda} $\lambda_j^{(s)}=\lambda_{n-j}^{(s)}$, $1\leq j\leq \frac{n-1}{2}$.
		\item\label{lambdak} If $n=2k$, then $\lambda_k^{(s)}=-2(s_o - s_e)$ where $s_o=|\{a\in\{a_1,\dots,a_s\}:\text{ $a$ is odd}\}|$ and $s_e=|\{a\in\{a_1,\dots,a_s\}:\text{ $a$ is even}\}|$.
		\item\label{lambdacreciente} If $n=2k$ then $0<\lambda_1^{(2)}< \lambda_1^{(3)}< \cdots < \lambda_1^{(\lfloor\frac{k-1}{2}\rfloor)}$ where $\lambda_1^{(j)}$ is the  eigenvalue given by \eqref{lambdaChebyshev} with  $S=\{1,2,\ldots,j,n-j,\ldots,n-1\}.$
	\end{enumerate}
\end{lemma}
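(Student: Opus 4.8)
The plan is to obtain every item directly from the exponential formula \eqref{formula autovalores}, which we first rewrite as $\lambda_j^{(s)}=\sum_{h=1}^s(\omega^{a_hj}+\omega^{-a_hj})=2\sum_{h=1}^s\cos(a_h\,2\pi j/n)$, and then to translate the resulting cosines and sines into Chebyshev polynomials through the defining identities $T_m(\cos\theta)=\cos(m\theta)$ and $U_m(\cos\theta)\sin\theta=\sin((m+1)\theta)$, together with the properties listed in Lemma \ref{lemma: prop polinomios de Chebyshev}. With this dictionary in place, items \eqref{beta igual lambda}--\eqref{lambdak} become evaluations of the same expression at special indices $j$, while only item \eqref{lambdacreciente} requires genuine estimation.

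For item \eqref{lambdaChebyshev} I would set $\theta=2\pi j/n$, so that $X_j=\cos\theta$ and $\cos(a_h\theta)=T_{a_h}(X_j)$; summing over $h$ gives $\lambda_j^{(s)}=2\sum_{h=1}^sT_{a_h}(X_j)$. When the $a_h$ are consecutive we have $\{a_1,\dots,a_s\}=\{a_1,a_1+1,\dots,a_s\}$, and I would replace each $2T_h$ by $U_h-U_{h-2}$ using item (6) of Lemma \ref{lemma: prop polinomios de Chebyshev}; the resulting sum telescopes to $U_{a_s}(X_j)+U_{a_s-1}(X_j)-U_{a_1-1}(X_j)-U_{a_1-2}(X_j)$. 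To reach the single-$U$ form I would use the half-angle relation: writing $\psi=\pi j/n$ we have $X_j=\cos 2\psi$ and $Y_j=\cos\psi$, and a sum-to-product computation based on item (5) gives $U_m(\cos2\psi)+U_{m-1}(\cos2\psi)=\sin((2m+1)\psi)/\sin\psi=U_{2m}(\cos\psi)$. Applying this with $m=a_s$ and $m=a_1-1$ yields $\lambda_j^{(s)}=U_{2a_s}(Y_j)-U_{2(a_1-1)}(Y_j)$, with $U_{-1}:=0$ absorbing the boundary term when $a_1=1$.

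The middle items are short. For item \eqref{beta igual lambda} I would reproduce the computation already done in \eqref{eq:beta igual menos lambda}: since $n=2k$ one has $\omega^{kj}=(-1)^j$, and substituting $b_h=k-a_{s-h+1}$ and reindexing gives $\beta_j^{(s)}=\omega^{kj}\lambda_j^{(s)}=(-1)^j\lambda_j^{(s)}$, which equals $\lambda_j^{(s)}$ precisely when $j$ is even. Item 3 is the case $j=0$: since $X_0=1$ and $T_m(1)=1$, item \eqref{lambdaChebyshev} gives $\lambda_0^{(s)}=2s$, and $\beta_0^{(s)}=\lambda_0^{(s)}$ because $0$ is even. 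Item \eqref{simetria lambda} follows because $X_{n-j}=\cos((n-j)2\pi/n)=\cos(2\pi j/n)=X_j$, so the $T$-expressions for $\lambda_{n-j}^{(s)}$ and $\lambda_j^{(s)}$ agree termwise. Item \eqref{lambdak} is the case $j=k$: here $X_k=\cos\pi=-1$ and $T_m(-1)=(-1)^m$, so $\lambda_k^{(s)}=2\sum_{h=1}^s(-1)^{a_h}=2(s_e-s_o)=-2(s_o-s_e)$.

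The only item demanding real work is \eqref{lambdacreciente}, and I expect its key estimate to be the main obstacle. Since the defining set is $\{1,2,\dots,j,n-j,\dots,n-1\}$, the consecutive case of item \eqref{lambdaChebyshev} applies with $a_1=1$ and $a_s=j$, whence $\lambda_1^{(j)}=U_{2j}(Y_1)-U_0(Y_1)=\dfrac{\sin((2j+1)\pi/n)}{\sin(\pi/n)}-1$ by item (5) and $U_0\equiv1$. Thus both monotonicity and positivity reduce to controlling $\sin((2j+1)\pi/n)$, and the crux is to show that for $1\le j\le\lfloor\frac{k-1}{2}\rfloor$ the angles $(2j+1)\pi/n$ all lie in $(0,\pi/2]$. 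This needs a short parity casework verifying $2j+1\le k=n/2$: when $k$ is odd, $2\lfloor\frac{k-1}{2}\rfloor+1=k$, and when $k$ is even it equals $k-1$. On the interval $(0,\pi/2]$ the sine is strictly increasing, so $j\mapsto\sin((2j+1)\pi/n)$ is strictly increasing, giving $\lambda_1^{(2)}<\lambda_1^{(3)}<\cdots<\lambda_1^{(\lfloor(k-1)/2\rfloor)}$; finally $\lambda_1^{(2)}>0$ because $\sin(5\pi/n)>\sin(\pi/n)$ forces the displayed quotient to exceed $1$.
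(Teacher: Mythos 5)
Your proposal is correct and follows essentially the same route as the paper: the $T$-expansion of \eqref{formula autovalores}, telescoping via $2T_m=U_m-U_{m-2}$ with the $U_{-1}:=0$ convention handling $a_1=1$, the same sum-to-product half-angle identity $U_m(\cos 2\psi)+U_{m-1}(\cos 2\psi)=U_{2m}(\cos\psi)$, and items (2)--(5) read off directly from the exponential formula. The only (cosmetic) divergence is item \eqref{lambdacreciente}, where the paper simply notes that each increment $\lambda_1^{(s+1)}-\lambda_1^{(s)}=2\cos\left((s+1)\frac{\pi}{k}\right)$ is positive, while you derive the same monotonicity from the quotient form $\frac{\sin((2j+1)\pi/n)}{\sin(\pi/n)}-1$ and the parity check $2j+1\le k$ — both arguments are valid and equivalent in substance.
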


\begin{proof}

	We first prove the item (1). Note  that, by identity \eqref{formula autovalores},
	\begin{eqnarray}
	\lambda_j^{(s)}&=&\sum_{h=1}^s\exp{\frac{2\pi j a_h}{n}i} + \sum_{h=1}^s\exp{\frac{2\pi j(n-a_h)}{n}i}=2\sum_{h=1}^s\frac{\exp{\frac{2\pi j a_h}{n}i}+\exp{\frac{-2\pi j a_h}{n}i}}{2}\nonumber\\&=&2\sum_{h=1}^s T_{a_h}(X_j).\nonumber \
	\end{eqnarray} 
	Suppose that $a_{i+1}=a_i+1$ for $1\le i\le s-1$, from Lemma \ref{lemma: prop polinomios de Chebyshev}, item (6),  we have
	\begin{align*}
	\lambda_j^{(s)}&=2\sum_{h=1}^s T_{a_h}(X_j)=
	\begin{cases}
	\sum_{h=2}^s(U_{h}(X_j)-U_{h-2}(X_j)) + 2T_1(X_j) & \text{ if $a_1=1$}, \\
	\sum_{h=1}^s(U_{a_h}(X_j)-U_{a_h-2}(X_j)) & \text{ if $a_1 \ge 2$},
	\end{cases}\\
	&= \begin{cases}
	U_s(X_j)+U_{s-1}(X_j)-U_1(X_j)-U_0(X_j) + 2X_j & \text{ if $a_1=1$}, \\
	U_{a_s}(X_j)+U_{a_s-1}(X_j)-U_{a_2-2}(X_j)-U_{a_1-2}(X_j) & \text{ if $a_1 \ge 2$},
	\end{cases}\\
	&= \begin{cases}
	U_s(X_j)+U_{s-1}(X_j)-2X_j-1 + 2X_j & \text{ if $a_1=1$}, \\
	U_{a_s}(X_j)+U_{a_s-1}(X_j)-U_{a_1-1}(X_j)-U_{a_1-2}(X_j) & \text{ if $a_1 \ge 2$},
	\end{cases}\\
	\end{align*}
	in conclusion, we have that 
	\begin{equation}\label{formula: autovalores en fc de chebysshev 2 clase}
	\lambda_j^{(s)}=U_{a_s}(X_j)+U_{a_s-1}(X_j)-U_{a_1-1}(X_j)-U_{a_1-2}(X_j),
	\end{equation}with $U_{-1}(X_j):=0$.
	
	On the other hand,  observe that for $a \in \mathbb{Z}_{\ge 0}$, the following identity holds:
	\begin{align*}
	\frac{\sin((a+1)j\frac{2\pi}{n})+\sin(a j\frac{2\pi}{n})}{\sin(j\frac{2\pi}{n})} &= \frac{2\sin\left(\frac{(a+1)j\frac{2\pi}{n}+aj\frac{2\pi}{n}}{2}\right)\cdot \cos\left(\frac{(a+1)j\frac{2\pi}{n}-aj\frac{2\pi}{n}}{2}\right)}{\sin(j\frac{2\pi}{n})}\\
	&=\frac{2\sin\left(\frac{(2a+1)j\pi}{n}\right)\cdot \cos(\frac{j\pi}{n})}{\sin(j\frac{2\pi}{n})}\\
	&= \frac{2\sin\left(\frac{(2a+1)j\pi}{n}\right)\cdot \cos(\frac{j\pi}{n})}{2\sin(j\frac{\pi}{n}) \cdot \cos(j\frac{\pi}{n})}\\
	&= \frac{\sin\left(\frac{(2a+1)j\pi}{n}\right)}{\sin(j\frac{\pi}{n})} = U_{2a}\left(\cos\left(j\frac{\pi}{n}\right)\right),
	\end{align*}
	Then, taking into account the above formula and identity \eqref{formula: autovalores en fc de chebysshev 2 clase}, we conclude that 
	$\lambda_j^{(s)}=U_{2a_s}(Y_j)-U_{2(a_1-1)}(Y_j)$ with  $Y_j=\cos(j\frac{\pi}{n})$.
	
	The statements $(2), (3),  (4)$ and $(5)$ follow directly from the formula \eqref{formula autovalores}. 
	Finally, \eqref{lambdacreciente} follows from  the fact that  $\cos(h\frac{\pi}{k})>0$ for all $1\leq h\leq s$ and $2\leq s\leq \frac{k-1}{2}$.
\end{proof}

\section{The inertia of circulant graphs }\label{sec: distinct-same-inertia}

In this section we provide sufficient conditions under which two circulant graphs with an even number of vertices have different or the same inertia.

\begin{theorem}\label{thm: particular case}
	Let $G_1(\Z_n,S_1)$ and $G_2(\Z_n,S_2)$ be two circulant graphs with $n=2k$ and $k\ge 6$, where  $S_1=\{1,2,2k-2,2k-1\}$ and $S_2=\{k-2,k-1,k+1,k+2\}$. Then $G_1(\Z_n,S_1)$ and $G_2(\Z_n,S_2)$ have  different inertias.
\end{theorem}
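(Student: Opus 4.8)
The plan is to exploit the relation $\beta_j^{(2)}=(-1)^j\lambda_j^{(2)}$ coming from Lemma~\ref{lemma: autovalores dados por Chebyshev}, item~(\ref{beta igual lambda}), which applies here because $S_2=\{k-2,k-1,k+1,k+2\}$ is precisely the set obtained from $S_1=\{1,2,2k-2,2k-1\}$ through the substitution $b_i=k-a_{s-i+1}$ with $a_1=1,a_2=2$ (so $G_1$ and $G_2$ are even singularly cospectral by Lemma~\ref{lem: SC n=2k}). Writing the inertia of $G_1$ as $(\pi_1,\nu_1,\delta_1)$ and that of $G_2$ as $(\pi_2,\nu_2,\delta_2)$, the relation shows at once that $\lambda_j^{(2)}=0$ iff $\beta_j^{(2)}=0$, so $\delta_1=\delta_2$; for even $j$ the two eigenvalues coincide, while for odd $j$ they have opposite sign. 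Denoting by $P_o$ (resp.\ $N_o$) the number of odd indices $j\in\{0,\dots,2k-1\}$ with $\lambda_j^{(2)}>0$ (resp.\ $\lambda_j^{(2)}<0$), a direct bookkeeping gives $\pi_1-\pi_2=P_o-N_o$ and $\nu_1-\nu_2=N_o-P_o$. Hence it suffices to prove that $P_o\neq N_o$.

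Next I would make the sign of $\lambda_j^{(2)}$ explicit. Since $a_1=1$ and $a_2=2$, Lemma~\ref{lemma: autovalores dados por Chebyshev}, item~(\ref{lambdaChebyshev}), gives $\lambda_j^{(2)}=2\bigl(T_1(X_j)+T_2(X_j)\bigr)=2\cos\theta_j+2\cos 2\theta_j$ with $\theta_j=j\pi/k$, and factoring yields $\lambda_j^{(2)}=2(2\cos\theta_j-1)(\cos\theta_j+1)$. Because $\cos\theta_j+1\ge 0$, with equality only at $j=k$, the sign of $\lambda_j^{(2)}$ for $j\neq k$ equals that of $2\cos\theta_j-1$. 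Thus, restricting attention to odd indices, $\lambda_j^{(2)}>0$ exactly when $\cos(j\pi/k)>1/2$, i.e.\ $j\in(0,k/3)\cup(5k/3,2k)$, and $\lambda_j^{(2)}<0$ exactly when $j\in(k/3,5k/3)\setminus\{k\}$; the remaining odd indices (namely $j=k$ when $k$ is odd, and $j=k/3,5k/3$ when these are integers) give eigenvalue $0$.

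Finally I would count. The involution $j\mapsto 2k-j$ preserves parity and, by Lemma~\ref{lemma: autovalores dados por Chebyshev}, item~(\ref{simetria lambda}), fixes $\lambda_j^{(2)}$; it carries the positive block $(0,k/3)$ bijectively onto $(5k/3,2k)$ and the negative block $(k/3,k)$ onto $(k,5k/3)$. Consequently $P_o=2p$ and $N_o=2n$, where $p$ and $n$ are the numbers of odd integers in $(0,k/3)$ and in $(k/3,k)$, respectively, so that $P_o\neq N_o$ reduces to $p\neq n$. Since $(k/3,k)$ has length $2k/3$, twice that of $(0,k/3)$, one expects $n>p$, and the one genuine task is to verify $p<n$ for every $k\ge 6$. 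I would settle this by a short case analysis according to $k\bmod 6$, the residue governing both the way the endpoints $k/3,5k/3$ round and whether they are odd, which are the only places where boundary effects could threaten the count; this verification is the main (indeed the only) obstacle. Granting it, $P_o=2p<2n=N_o$ forces $\pi_1<\pi_2$ and $\nu_1>\nu_2$, so $G_1$ and $G_2$ have different inertias.
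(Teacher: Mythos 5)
Your proposal is correct and follows essentially the same route as the paper: both reduce the problem to the sign of $2(2\cos\theta-1)(\cos\theta+1)$ at $\theta=j\pi/k$ for odd $j$ (the paper writes this as $\lambda_j=2f(j\pi/k)$ with $f(x)=2\cos^2(x)+\cos(x)-1$) and then show the count of positive odd-index eigenvalues differs from the count of negative ones, which by $\beta_j=(-1)^j\lambda_j$ forces different inertias. The counting step you defer to a case analysis mod $6$ is exactly what the paper carries out, via explicit floor-function formulas for $P_k$ and $N_k$ split by residue (checking $6\le k\le 12$ directly and using $P_k\le \frac{k+3}{3}<\frac{2k-9}{3}\le N_k$ for $k\ge 13$); your symmetry reduction $j\mapsto 2k-j$, giving $P_o=2p$ and $N_o=2n$, is a mild simplification of that bookkeeping but not a different argument.
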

\begin{proof}
	Let $\lambda_j$ and $\beta_j$ be the eigenvalues of $G_1$ and $G_2$, respectively, with $ 0 \le j \le n-1$. From Lemma~\ref{lem: SC n=2k}, $G_1$ and $G_2$ are singularly cospectral. Besides, $\lambda_{2r}=\beta_{2r}$ for each $0\le r\le k-1$ and $\lambda_{j}=2f(\frac{j\pi}{k})$ for each $0\le j\le n-1$, where $f(x)=2\cos^2(x)+\cos(x)-1$. In order to prove that they have distinct inertias we will prove that $P_k< N_k$ for every $k\ge 6$, where $P_k=|\{r\in\{1,\ldots,k\}:\, \lambda_{2r-1}>0\}|$ and $N_k=|\{r\in\{1,\ldots,k\}:\, \lambda_{2r-1}<0\}|$. 
	\begin{equation}\label{eq: P_k}
	P_k=\left|\left\{r\in\{1,\ldots,k\}:\; 0<\frac{2r-1}{k}<\frac1 3\right\}\right|+\left|\left\{r\in\{1,\ldots,k\}:\;  \frac 5 3<\frac{2r-1} {k}<2\right\}\right|
	\end{equation}
	and
	\begin{equation}\label{eq: N_k}
	N_k=\left|\left\{r\in\{1,\ldots,k\}:\; \frac 1 3<\frac{2r-1}{k}<1\right\}\right|+\left|\left\{r\in\{1,\ldots,k\}:\;  1<\frac{2r-1} {k}<\frac 5 3\right\}\right|.
	\end{equation}
	From~\eqref{eq: P_k} and~\eqref{eq: N_k} we obtain that
	\begin{equation}\label{eq: P_k formula}
	P_k=
	\begin{cases}
	k+\lfloor\frac{k+3} 6\rfloor-\lfloor\frac{5k+3} 6\rfloor-1& \mbox{if } k\equiv 3\ (\textrm{mod}\ 6),\\
	k+\lfloor\frac{k+3} 6\rfloor-\lfloor\frac{5k+3} 6\rfloor& \mbox{if } k\not\equiv 3\ (\textrm{mod}\ 6),
	\end{cases}
	\end{equation}
	and
	\begin{equation}\label{eq: N_k formula}
N_k=
	\begin{cases}
	\lfloor\frac{5k+3} 6\rfloor-\lfloor\frac{k+3} 6\rfloor-2& \mbox{if } k\equiv 3\ (\textrm{mod}\ 6)\\
	\lfloor\frac{5k+3} 6\rfloor-\lfloor\frac{k+3} 6\rfloor-1& \mbox{if } k\equiv 1\ (\textrm{mod}\ 2) \mbox{ and }k\not\equiv 3\ (\textrm{mod}\ 6)\\
	\lfloor\frac{5k+3} 6\rfloor-\lfloor\frac{k+3} 6\rfloor& \mbox{if } k\not\equiv 1\ (\textrm{mod}\ 2). 
	\end{cases}
		\end{equation}
	It is easy to verify by using~\eqref{eq: N_k formula} and~\eqref{eq: P_k formula} that $P_k<N_k$ for every $6\le k\le 12$. Let us consider $k\ge 13$. On the one hand, 
	\begin{equation*}
	P_k \le k+\lfloor\frac{k+3} 6\rfloor-\lfloor\frac{5k+3} 6\rfloor\le \frac{k+3} 3.
	\end{equation*}
	On the other hand,
	\begin{equation*}
	N_k \ge \lfloor\frac{5k+3} 6\rfloor-\lfloor\frac{k+3} 6\rfloor-2\ge \frac{2k-9}3.
	\end{equation*}
	Clearly, as $k+3 < 2k-9$, therefore, $P_k<N_k$.
\end{proof}

\begin{corollary}\label{cor:distinctinertia}
 $G_1(\Z_n,S_1)$ and $G_2(\Z_n,S_2)$  are non cospectral singularly cospectral (NCSC) circulant graphs.
\end{corollary}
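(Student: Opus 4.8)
The plan is to obtain this corollary as an immediate logical consequence of Theorem~\ref{thm: particular case} together with Lemma~\ref{lem: SC n=2k}, since all of the quantitative work has already been carried out in the theorem. There are two assertions to establish: that $G_1$ and $G_2$ are singularly cospectral, and that they are \emph{not} cospectral.

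For the first assertion, I would simply verify that the pair $(S_1,S_2)$ fits the template of Lemma~\ref{lem: SC n=2k}. Writing $S_1=\{a_1,a_2,n-a_2,n-a_1\}$ with $s=2$, $a_1=1$ and $a_2=2$, the prescription $b_i=k-a_{s-i+1}$ yields $b_1=k-a_2=k-2$ and $b_2=k-a_1=k-1$, so that $S_2=\{k-2,k-1,n-(k-1),n-(k-2)\}=\{k-2,k-1,k+1,k+2\}$, exactly the set appearing in the theorem. Since $k\ge 6$ guarantees $1\le a_1<a_2\le k-1$, the hypotheses of Lemma~\ref{lem: SC n=2k} are met and $G_1$ and $G_2$ are singularly cospectral; this was in fact already recorded inside the proof of Theorem~\ref{thm: particular case}.

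For the second assertion, I would invoke the contrapositive of the elementary fact that cospectral graphs share the same inertia. Indeed, if $G_1$ and $G_2$ were cospectral, then by definition their adjacency matrices would have the same multiset of eigenvalues, and hence the same number of positive, negative, and zero eigenvalues; that is, $(\pi(G_1),\nu(G_1),\delta(G_1))=(\pi(G_2),\nu(G_2),\delta(G_2))$. But Theorem~\ref{thm: particular case} asserts precisely that these two inertia triples differ for every $k\ge 6$. Hence $G_1$ and $G_2$ cannot be cospectral.

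Combining the two assertions, $G_1$ and $G_2$ are noncospectral singularly cospectral circulant graphs, as claimed. I do not expect any genuine obstacle here: the delicate part, namely the floor-function counting that forces $P_k<N_k$ for all $k\ge 6$, was the content of Theorem~\ref{thm: particular case}, and the corollary merely repackages that strict inequality (distinct inertias) as noncospectrality.
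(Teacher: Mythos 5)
Your proposal is correct and follows exactly the route the paper intends: singular cospectrality comes from Lemma~\ref{lem: SC n=2k} (with $s=2$, $a_1=1$, $a_2=2$, so $b_1=k-2$, $b_2=k-1$, as you verify), and noncospectrality follows because cospectral graphs must share the same inertia, contradicting Theorem~\ref{thm: particular case}. The paper leaves this corollary without an explicit proof precisely because it is this immediate repackaging, so there is nothing to add or correct.
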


Let $s$, $k$, $n$ be integers such that $k\geq 6$, $n=2k$ and $2\leq s \leq k-3$, and let 
	$S_1=\{1,\ldots,s,n-s,\ldots,n-2,n-1\}$ and $S_2=\{k-s,\ldots,k-1,k+1,\ldots,k+s\}$. The circulant graphs $G_1:=G_1(\mathbb{Z}_n,S_1)$ and $ G_2:=G_2(\mathbb{Z}_n,S_2)$ might have the same inertia (see Theorem~\ref{thm: same inertia}).  Theorem~\ref{lambdas} generalizes Corollary~\ref{cor:distinctinertia} by considering a pair of circulant graphs with the same inertia. Nevertheless,   the used technique   is completely different and its proof needs more sophisticated arguments.

We now provide sufficient conditions for pairs of circulant graphs as in Lemma ~\ref{lem: SC n=2k} to have the same inertia. 
\begin{theorem}\label{thm: same inertia}
		Let  $S_1=\{1,\ldots,s, n-s, \ldots, n-1\}$ and $S_2=\{k-s,\ldots,k-1, k+1, \ldots, k+s\}$. Then, the circulant graphs $G_1(\Z_n,S_1)$ and $G_2(\Z_n,S_2)$,  with $n=2k,$ $k=4\alpha +9$, $s=2\alpha +4$, and $ \alpha \geq 0$,  have the same  inertia. 
		\begin{proof}			
			 Let $\lambda_j^{(s)}$ and $\beta_j^{(s)}$ with $0\leq j \leq n-1$, be the eigenvalues of $G_1$ and $G_2$ respectively, then we have by Lemma \ref{lemma: autovalores dados por Chebyshev} 
			 \begin{enumerate}
			 	\item $\lambda_0^{(s)} =\beta_0^{(s)}=2s$, $\lambda_k=-2(s_o - s_e)=\beta_k=0$ and  $\beta_j^{(s)}=(-1)^j\lambda_{j}^{(s)}$ for any  $0\leq j \leq n-1$. 
			 	\item $\lambda_j^{(s)}=\lambda_{n-j}^{(s)}$ if  $1\leq j\leq \frac{n-1}{2}$.
			 \end{enumerate}  In particular, if $j $ is an even number we have 
			 $\beta_j^{(s)}=\lambda_{j}^{(s)}$, so to obtain the inertia of $G_1$ and $G_2$ respectively, we only need to analyze the sign of the eigenvalues $\lambda_j^{(s)}$ with $j$ an  odd  number such that $1\leq j \leq 4\alpha+7.$  We denote by $$J=\{j\in \mathbb{N}: j \:\textrm{is an  odd number and}\: 1\leq j\leq 4\alpha+7\}.$$ 
			 Note that, as consequence of  Lemma \ref{lemma: autovalores dados por Chebyshev} item \eqref{lambdaChebyshev}, we obtain 
			 	\begin{align*}
			 \lambda_{j}^{(s)}&=
			 U_{2s}\left(\cos\left(j\frac{\pi}{8\alpha+18}\right)\right)-1
			 \\&= \frac{\sin\left((4\alpha+8+1)\left(j\frac{\pi}{8\alpha+18}\right)\right)}{\sin\left( j\frac{\pi}{8\alpha+18}\right)}-1\\&=
			 \frac{\sin\left(j\frac{\pi}{2}\right)}{\sin\left(j\frac{\pi}{2}\frac{1}{4\alpha+9}\right)}-1=\frac{\sin\left(j\frac{\pi}{2}\right)-\sin\left(j\frac{\pi}{2}\frac{1}{4\alpha+9}\right)}{\sin\left(j\frac{\pi}{2}\frac{1}{4\alpha+9}\right)}\\&=
			 \frac{2\cos\left(j\frac{\pi}{4}\left(1+\frac{1}{4\alpha+9}\right)\right)\sin\left(j\frac{\pi}{4}\left(1-\frac{1}{4\alpha+9}\right)\right)}{\sin\left(j\frac{\pi}{2}\frac{1}{4\alpha+9}\right)}
			 \end{align*}
			 for any $0\leq j\leq n-1$.
			 
			 If $j\in J$,   then  $0<j\frac{\pi}{2}\frac{1}{4\alpha+9}<\frac{\pi}{2}$, $\sin\left(j\frac{\pi}{2}\frac{1}{4\alpha+9}\right)>0$ and from now on for the sake of simplicity, we denote for any 
			 \begin{align*}
			 \gamma_j=j\frac{\pi}{4}\left(1+\frac{1}{4\alpha+9}\right) \quad \textrm{and}\quad \delta_j=j\frac{\pi}{4}\left(1-\frac{1}{4\alpha+9}\right).
			 \end{align*}
			 Thus, \begin{align*}
		\lambda_{j}^{(s)}= \frac{2\cos(\gamma_j)\sin(\delta_j)}{\sin\left(j\frac{\pi}{2}\frac{1}{4\alpha+9}\right)}.
			 \end{align*}
			 As any $j\in J$ satisfies the inequality $1\leq j < 4\alpha +9$, then we conclude that 
			 \begin{align*}
			\left(\frac{j-1}{2}\right)\frac{\pi}{2}< \gamma_j, \delta_j < \left(\frac{j+1}{2}\right)\frac{\pi}{2}.
			 \end{align*}
			 From the oddness of $j$ we have to consider only two possible cases:
			 \begin{enumerate}
			 	\item Case 1: If $\frac{j-1}{2}$ is an even number, then $2\cos(\gamma_j)\sin(\delta_j)>0$. Combining this with the fact that $\sin\left(j\frac{\pi}{2}\frac{1}{4\alpha+9}\right)>0$, we conclude that $\lambda_{j}^{(s)}>0$.
			 	\item Case 2: If $\frac{j-1}{2}$ is an odd  number, then with a reasoning similar to that done in the previous case we conclude that $\lambda_{j}^{(s)}<0$.\end{enumerate}
		 	In conclusion,  for any odd number $j$ such that $1\leq j \leq 4\alpha +7$  we get 
		 	\begin{equation*}
		 	\begin{cases}
		 \lambda_{j}^{(s)}>0& \mbox{if } \frac{j-1}{2} \mbox{ is an even number} ,\\
		 	\lambda_{j}^{(s)}<0& \mbox{if } \frac{j-1}{2}\mbox{ is an odd number}.
		 	\end{cases}
		 	\end{equation*}
		  Thus the cardinality of the set $J$ is an even number and this allows us to conclude that 
		 	\begin{align*}
		 	\left| \Big\{\lambda_{j}^{(s)}: \lambda_{j}^{(s)}>0\:\textrm{and}\: j\in J\Big\}\right|=\left| \Big\{\lambda_{j}^{(s)}: \lambda_{j}^{(s)}<0 \:\textrm{and}\: j\in J\Big\}\right|.
		 	\end{align*}
		 	Therefore, $G_1(\Z_n,S_1)$ and $G_2(\Z_n,S_2)$ have the same inertia. 
				\end{proof}
\end{theorem}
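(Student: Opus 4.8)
The plan is to exploit the sign-flip relation $\beta_j^{(s)} = (-1)^j \lambda_j^{(s)}$ supplied by Lemma~\ref{lemma: autovalores dados por Chebyshev}\eqref{beta igual lambda}. Since this gives $\beta_j^{(s)} = \lambda_j^{(s)}$ for every even $j$, the even-indexed eigenvalues contribute identically to the inertias of $G_1$ and $G_2$; only the odd-indexed eigenvalues are negated when passing from one graph to the other. Hence $\pi(G_1)=\pi(G_2)$ and $\nu(G_1)=\nu(G_2)$ will follow the moment I establish that, among the odd indices, $\lambda_j^{(s)}$ is strictly positive exactly as often as it is strictly negative. First I would fold the odd indices $\{1,3,\dots,2k-1\}$ using the symmetry $\lambda_j^{(s)}=\lambda_{n-j}^{(s)}$: each pair $\{j,2k-j\}$ carries two equal eigenvalues, and the only self-paired odd index is $j=k$ (odd because $k=4\alpha+9$), where $s_o=s_e=\alpha+2$ forces $\lambda_k^{(s)}=0$ by item~\eqref{lambdak}. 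This reduces everything to the representative set $J=\{\,j \text{ odd} : 1\le j\le 4\alpha+7\,\}$ of cardinality $s=2\alpha+4$.

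Next I would obtain a usable closed form for $\lambda_j^{(s)}$ on $J$. Because $S_1=\{1,\dots,s\}$ is a consecutive block, Lemma~\ref{lemma: autovalores dados por Chebyshev}\eqref{lambdaChebyshev} yields $\lambda_j^{(s)}=U_{2s}(Y_j)-1$ with $Y_j=\cos(j\pi/n)$. The decisive arithmetic coincidence is that $2s+1=k$, so $U_{2s}(\cos\theta)=\sin((2s+1)\theta)/\sin\theta$ collapses to $\sin(j\pi/2)/\sin(j\pi/n)$. Rewriting the resulting difference of sines through $\sin A-\sin B=2\cos(\tfrac{A+B}{2})\sin(\tfrac{A-B}{2})$ produces
\begin{equation*}
\lambda_j^{(s)}=\frac{2\cos\gamma_j\,\sin\delta_j}{\sin\!\left(j\frac{\pi}{2}\frac{1}{4\alpha+9}\right)},\qquad
\gamma_j=j\frac{\pi}{4}\Big(1+\tfrac{1}{4\alpha+9}\Big),\quad
\delta_j=j\frac{\pi}{4}\Big(1-\tfrac{1}{4\alpha+9}\Big).
\end{equation*}
For $j\in J$ the denominator is strictly positive, since $0<j\pi/(2(4\alpha+9))<\pi/2$, so the sign of $\lambda_j^{(s)}$ coincides with the sign of $\cos\gamma_j\,\sin\delta_j$.

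The crux, and the step I expect to be the main obstacle, is determining that sign uniformly. I would show that both $\gamma_j$ and $\delta_j$ lie strictly inside the interval $\big((\tfrac{j-1}{2})\tfrac{\pi}{2},(\tfrac{j+1}{2})\tfrac{\pi}{2}\big)$, using $1\le j<4\alpha+9$ so that the perturbation $\pm\tfrac{1}{4\alpha+9}$ never pushes either argument across the surrounding multiples of $\pi/2$. Because $j$ is odd, these endpoints are consecutive multiples of $\pi/2$, so $\gamma_j$ and $\delta_j$ inhabit a single quadrant-type interval in which $\cos\gamma_j\sin\delta_j$ has a fixed sign governed solely by the parity of $(j-1)/2$: positive when $(j-1)/2$ is even and negative when it is odd. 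The care needed here is twofold, namely verifying that the perturbed arguments really stay confined, and checking that both residue classes realizing a given parity of $(j-1)/2$ deliver the same sign of the product.

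Finally I would count. As $j$ ranges over $1,3,\dots,4\alpha+7$, the integer $(j-1)/2$ ranges over $0,1,\dots,2\alpha+3$, which splits into exactly $\alpha+2$ even and $\alpha+2$ odd values. Thus $|\{j\in J:\lambda_j^{(s)}>0\}|=|\{j\in J:\lambda_j^{(s)}<0\}|$, and after doubling through the symmetry fold (with the lone zero at $j=k$ untouched) the odd-indexed eigenvalues of $G_1$ are positive as often as negative. Together with the identical even-indexed contributions this yields $\pi(G_1)=\pi(G_2)$ and $\nu(G_1)=\nu(G_2)$, whence $\delta(G_1)=\delta(G_2)$ as well, and $G_1$ and $G_2$ share the same inertia.
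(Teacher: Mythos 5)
Your proposal is correct and follows essentially the same route as the paper's proof: the same reduction via $\beta_j^{(s)}=(-1)^j\lambda_j^{(s)}$ to the odd indices $J=\{1,3,\dots,4\alpha+7\}$, the same collapse of $U_{2s}$ using $2s+1=k$, the same factorization into $\cos\gamma_j\sin\delta_j$ over a positive denominator, and the same parity count of $(j-1)/2$ splitting $J$ into $\alpha+2$ positive and $\alpha+2$ negative eigenvalues. Your write-up is in fact slightly more explicit than the paper on two points it leaves implicit, namely the symmetry fold $\lambda_j^{(s)}=\lambda_{n-j}^{(s)}$ justifying the restriction to $j\le 4\alpha+7$ with the self-paired zero $\lambda_k^{(s)}=0$ (from $s_o=s_e=\alpha+2$), and the check that both residue classes of a given parity of $(j-1)/2$ yield the same sign of the product.
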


\section{NCSC circulant graphs }\label{sec: same-inertia}

This section is devoted to providing sufficient conditions under which two circulant graphs with an even number of vertices are NCSC graphs. In order to obtain the main result of this section, we firstly need to establish some properties of the eigenvalues of circulant graphs.

\subsection{Some technical results} 
 With the aid of Lemma \ref{lemma: prop polinomios de Chebyshev}, we reach the following  result for certain eigenvalues. 
 
 \begin{proposition} \label{lemma: autovalores (s)=(k-s-1)} Let $G_1(\Z_n,S_1)$ and $G_2(\Z_n,S_2)$ be  circulant graphs with $n=2k$, $S_1=\{1,2, \ldots,s,n-s,\ldots,n-1\}$,  $S_2=\{1,2, \ldots,k-s-1,n-(k-s-1),\ldots,n-1\}$, $2\leq s \leq \frac{k-1}{2}$ and $0\leq j \leq  \frac{k-1}{2}$. Then,  
 	$\lambda_{2j+1}^{(s)}=\beta_{2j+1}^{(k-s-1)}$, where $\lambda_{2j+1}^{(s)}$ and $\beta_{2j+1}^{(k-s-1)}$ are the eigenvalues of $G_1$ and $G_2$, respectively. 
 	
 \end{proposition}
 \begin{proof}
 	It is straightforward from Lemma \ref{lemma: autovalores dados por Chebyshev}, item $(1)$, and Lemma \ref{lemma: prop polinomios de Chebyshev}, items $(2)$ and $(3)$.
 \end{proof}
\begin{lemma}\label{lemma: lambda_k mas lambda_1 positivo}Let $G(\Z_n,S)$ be a circulant graph with $n=2k$ and $S=\{1,2,\ldots,s,n-s,\ldots,n-1\}$. If $2\leq s \leq \frac{k-1}{2}$ and $k\geq 6$, then $\lambda_k^{(s)}+\lambda_1^{(s)}>0$. 
\end{lemma}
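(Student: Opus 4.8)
The plan is to evaluate the two eigenvalues separately and then pit a crude bound on $\lambda_k^{(s)}$ against a sharper bound on $\lambda_1^{(s)}$. First I would compute $\lambda_k^{(s)}$ via Lemma~\ref{lemma: autovalores dados por Chebyshev}, item~\eqref{lambdak}. Since $\{a_1,\dots,a_s\}=\{1,2,\dots,s\}$ contains $s_o=\lceil s/2\rceil$ odd integers and $s_e=\lfloor s/2\rfloor$ even integers, we have $s_o-s_e\in\{0,1\}$, whence $\lambda_k^{(s)}=-2(s_o-s_e)\in\{0,-2\}$. In particular $\lambda_k^{(s)}\ge -2$, so it suffices to establish the strict inequality $\lambda_1^{(s)}>2$.

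Next I would evaluate $\lambda_1^{(s)}$. Because the elements $a_1=1<a_2=2<\cdots<a_s=s$ are consecutive, item~\eqref{lambdaChebyshev} of Lemma~\ref{lemma: autovalores dados por Chebyshev} applies and gives $\lambda_1^{(s)}=U_{2s}(Y_1)-U_0(Y_1)=U_{2s}(Y_1)-1$ with $Y_1=\cos(\pi/n)=\cos(\pi/(2k))$. Using item~$(5)$ of Lemma~\ref{lemma: prop polinomios de Chebyshev} to rewrite $U_{2s}$, this becomes
\[
\lambda_1^{(s)}=\frac{\sin\!\big((2s+1)\tfrac{\pi}{2k}\big)}{\sin\!\big(\tfrac{\pi}{2k}\big)}-1 .
\]

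The hard part is bounding this sine ratio from below by $3$, which is exactly what is needed to beat the crude bound $\lambda_k^{(s)}\ge -2$ in the worst case $s$ odd. Write $m=2s+1$ and $\phi=m\tfrac{\pi}{2k}$. The hypothesis $2\le s\le\frac{k-1}{2}$ forces $m=2s+1\le k$, so $\phi\in(0,\tfrac{\pi}{2}]$ and the ratio equals $\sin\phi/\sin(\phi/m)$. Applying the elementary estimate $\sin(\phi/m)<\phi/m$ and then Jordan's inequality $\sin\phi/\phi\ge 2/\pi$ on $(0,\tfrac{\pi}{2}]$, I would obtain
\[
\frac{\sin\phi}{\sin(\phi/m)}>m\,\frac{\sin\phi}{\phi}\ge\frac{2m}{\pi}\ge\frac{10}{\pi}>3,
\]
where the last inequality uses $m\ge 5$ (since $s\ge 2$). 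Hence $\lambda_1^{(s)}>3-1=2$, and combining this with $\lambda_k^{(s)}\ge -2$ yields $\lambda_k^{(s)}+\lambda_1^{(s)}>0$, as claimed. The only delicate points are verifying that $\phi\le\pi/2$ (which is precisely the range restriction on $s$) and choosing the two elementary trigonometric estimates tightly enough that the resulting bound clears the threshold $3$.
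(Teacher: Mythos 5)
Your proof is correct, and it departs from the paper's at the decisive step. Both arguments open identically: item \eqref{lambdak} of Lemma~\ref{lemma: autovalores dados por Chebyshev} gives $\lambda_k^{(s)}=-2(s_o-s_e)\in\{-2,0\}$ (so $\lambda_k^{(s)}\ge -2$), reducing the lemma to showing $\lambda_1^{(s)}>2$. For that bound the paper never touches the closed form of $\lambda_1^{(s)}$ for general $s$: it invokes the monotonicity item \eqref{lambdacreciente} to drop to the worst case $\lambda_1^{(s)}\ge\lambda_1^{(2)}$, writes $\lambda_1^{(2)}=U_2(X_1)+U_1(X_1)-1=4\cos^2(\pi/k)+2\cos(\pi/k)-2$, and minimizes over $k\ge 6$ at $k=6$, where the value is $\sqrt{3}(\sqrt{3}+1)-2=\sqrt{3}+1>2$. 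You instead keep $s$ general, use the consecutive-parameter formula $\lambda_1^{(s)}=U_{2s}(Y_1)-1=\sin\bigl((2s+1)\tfrac{\pi}{2k}\bigr)/\sin\bigl(\tfrac{\pi}{2k}\bigr)-1$, and clear the threshold uniformly via $\sin(\phi/m)<\phi/m$ together with Jordan's inequality $\sin\phi/\phi\ge 2/\pi$ on $\bigl(0,\tfrac{\pi}{2}\bigr]$, which is legitimate because $\phi=(2s+1)\tfrac{\pi}{2k}\le\tfrac{\pi}{2}$ follows exactly from $s\le\tfrac{k-1}{2}$ --- a hypothesis check you handle correctly, and the strictness of $\sin(\phi/m)<\phi/m$ keeps the final inequality strict. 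Comparing the two: the paper's route is computationally lighter but leans on the separately established monotonicity statement and on $k\ge 6$; yours is self-contained at this point, yields the stronger uniform estimate $\lambda_1^{(s)}>\tfrac{2(2s+1)}{\pi}-1\ge\tfrac{10}{\pi}-1\approx 2.18$ (in fact growing linearly in $s$), and needs only $2s+1\le k$, so it even covers $k=5$, which the paper's hypothesis excludes.
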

\begin{proof}
	Recall that, from formula \eqref{formula: autovalores en fc de chebysshev 2 clase}, items \eqref{lambdak} and  \eqref{lambdacreciente} of Lemma \ref{lemma: autovalores dados por Chebyshev}, we have that $\lambda_k^{(s)}\geq -2$,  and then
	\begin{align*}
	\lambda_k^{(s)}+\lambda_1^{(s)}&\geq-2+\lambda_1^{(2)}\\      
	& \geq U_2(X_1)+U_1(X_1)-1-2\\           
	&\geq 2X_1(2X_1+1)-1-3.
	\end{align*}                              
	Then, we obtain that
	\begin{align*}
	\lambda_k^{(s)}+\lambda_1^{(s)}&\geq 2\cos\left(\frac{\pi}{k}\right)\left(2\cos\left(\frac{\pi}{k}\right)+1\right)-4\\      
	& \geq 2\cos\left(\frac{\pi}{6}\right)\left(2\cos\left(\frac{\pi}{6}\right)+1\right)-4>0.\          
	\end{align*}                           
	
\end{proof}

Next,  we derive some new bounds for the eigenvalue $\lambda_1^{(s)}$.
 
 \begin{proposition} \label{cota primer autovalor}
 	Let $k\geq 6$, $n=2k$, $S=\{1,2,\ldots,s,n-s,\ldots,n-1\}$, 
 	and $2\leq s\leq \frac{k-1}{2}$, then it holds
 	$$
 	\frac{s}{k}(2k-2s-1)<\lambda_1^{(s)}<\frac{7s}{2k}\left(k-s-\frac{1}{2}\right).
 	$$
 \end{proposition}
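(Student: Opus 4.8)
The plan is to start from the explicit trigonometric form of $\lambda_1^{(s)}$. Since $S=\{1,2,\dots,s,n-s,\dots,n-1\}$ consists of consecutive values $a_h=h$, formula \eqref{formula autovalores} with $\omega=\exp(\pi i/k)$ and $j=1$ gives
\[
\lambda_1^{(s)}=\sum_{h=1}^s(\omega^{h}+\omega^{-h})=2\sum_{h=1}^s\cos\!\Big(\tfrac{h\pi}{k}\Big),
\]
equivalently $\lambda_1^{(s)}=U_{2s}(\cos\tfrac{\pi}{2k})-1$ by Lemma~\ref{lemma: autovalores dados por Chebyshev}(\ref{lambdaChebyshev}). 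Because $1\le h\le s\le\frac{k-1}{2}$, every argument $\frac{h\pi}{k}$ lies in $(0,\frac{\pi}{2})$, where the cosine is positive and concave; this is the feature I would exploit for both inequalities.

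For the lower bound I would apply the elementary inequality $\cos x>1-\frac{x^2}{2}$ (strict for $x\neq0$) to each summand, obtaining
\[
\lambda_1^{(s)}>2s-\frac{\pi^2}{k^2}\sum_{h=1}^s h^2=2s-\frac{\pi^2 s(s+1)(2s+1)}{6k^2}.
\]
Writing the target as $\frac{s}{k}(2k-2s-1)=2s-\frac{s(2s+1)}{k}$, it suffices to check $\frac{\pi^2 s(s+1)(2s+1)}{6k^2}\le\frac{s(2s+1)}{k}$, which after cancellation is exactly $\pi^2(s+1)\le 6k$. Since $s\le\frac{k-1}{2}$ forces $s+1\le\frac{k+1}{2}$, this follows from $\pi^2\frac{k+1}{2}\le 6k$, i.e. $k\ge\frac{\pi^2}{12-\pi^2}\approx4.63$, which holds for all $k\ge6$. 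This settles the lower inequality.

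For the upper bound the naive term-wise estimate $\cos x<1-\frac{x^2}{2}+\frac{x^4}{24}$ produces an unpleasant quartic sum, so instead I would use the concavity of $\cos$ on $(0,\frac\pi2)$ directly. By Jensen's inequality applied to the $s$ points $\frac{h\pi}{k}$ with equal weights,
\[
\frac1s\sum_{h=1}^s\cos\!\Big(\tfrac{h\pi}{k}\Big)\le\cos\!\Big(\tfrac{\pi}{k}\cdot\tfrac1s\sum_{h=1}^s h\Big)=\cos\!\Big(\tfrac{(s+1)\pi}{2k}\Big),
\]
hence $\lambda_1^{(s)}\le 2s\cos\psi$ with $\psi:=\frac{(s+1)\pi}{2k}$. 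The point is that $\psi<\frac{\pi}{2}$ whenever $k\ge2$ (because $s+1\le\frac{k+1}{2}$). Rewriting the target in terms of $\psi$ gives $\frac{7s}{2k}(k-s-\frac12)=2s\big(\frac74-\frac{7\psi}{2\pi}+\frac{7}{8k}\big)$, so it remains to prove $\cos\psi<\frac74-\frac{7\psi}{2\pi}+\frac{7}{8k}$. Since $\frac{7}{8k}>0$, it is enough to show that $q(\psi):=\frac74-\frac{7\psi}{2\pi}-\cos\psi$ is positive on $[0,\frac\pi2)$; but $q'(\psi)=\sin\psi-\frac{7}{2\pi}<1-\frac{7}{2\pi}<0$, so $q$ is strictly decreasing, and as $q(\frac\pi2)=0$ we get $q(\psi)>0$ for $\psi<\frac\pi2$. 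Combining, $\lambda_1^{(s)}\le 2s\cos\psi<\frac{7s}{2k}(k-s-\frac12)$.

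The routine parts are the lower bound together with the standard sum-of-powers identity. The main obstacle is the upper bound: the correct move is to avoid summing a truncated Taylor series and instead collapse the whole cosine sum via Jensen into a single value $2s\cos\psi$, after which everything reduces to the one-variable monotonicity of $q$. Checking that the relevant $\psi$ never reaches $\frac\pi2$ (so that $q(\psi)>0$ strictly) is the one place where the hypothesis $s\le\frac{k-1}{2}$ is used in an essential way.
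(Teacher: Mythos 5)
Your proof is correct, and it takes a genuinely different route from the paper. The paper proves both bounds by induction on $s$: the base case $s=2$ and each inductive step are reduced to showing an auxiliary function of $k$ is positive or bounded, which is done by one-variable calculus (locating a critical point $k_0$, checking monotonicity on either side, and computing limits as $k\to+\infty$); the argument is repeated four times with different auxiliary functions. You avoid induction entirely. For the lower bound, the term-wise estimate $\cos x>1-\frac{x^2}{2}$ plus $\sum_{h=1}^s h^2=\frac{s(s+1)(2s+1)}{6}$ reduces everything to $\pi^2(s+1)\le 6k$, which indeed follows from $s+1\le\frac{k+1}{2}$ and $k\ge 6$ (it already holds for $k\ge 5$); strictness is inherited from the first step, so the chain is fine. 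For the upper bound, Jensen's inequality is legitimately applicable because all arguments $\frac{h\pi}{k}$ lie in $\bigl(0,\frac{\pi}{2}\bigr)$ under $s\le\frac{k-1}{2}$, where cosine is concave, and your algebraic rewriting $\frac{7s}{2k}\bigl(k-s-\frac12\bigr)=2s\bigl(\frac74-\frac{7\psi}{2\pi}+\frac{7}{8k}\bigr)$ with $\psi=\frac{(s+1)\pi}{2k}$ checks out; the function $q(\psi)=\frac74-\frac{7\psi}{2\pi}-\cos\psi$ is strictly decreasing since $\frac{7}{2\pi}>1\ge\sin\psi$, vanishes at $\frac{\pi}{2}$, and $\psi<\frac{\pi}{2}$, so $q(\psi)>0$ and the final inequality is strict, as required. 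What your approach buys is a shorter, self-contained proof with no induction and no asymptotic arguments; what the paper's approach buys is that its inductive step isolates the increment $2\cos\bigl(\frac{s\pi}{k}\bigr)$, a decomposition that matches how these eigenvalue sums are manipulated elsewhere in the paper (e.g.\ item (6) of Lemma 2.4), but at the cost of considerably more delicate sign analysis.
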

 \begin{proof}
 	We proceed by induction on $s$ to prove  both bounds. We start with the lower bound.
 	
 	Observe that 
 	\begin{equation}\label{ec: formula lambda s=2}
 	\lambda_1^{(2)}=2\cos\left(\frac{\pi}{k}\right)+2\cos\left(\frac{2\pi}{k}\right)=4\cos^2\left(\frac{\pi}{k}\right)+2\cos\left(\frac{\pi}{k}\right)-2.
 	\end{equation}
 	We shall prove that for any $k\geq 6$ it holds that 
 	\begin{eqnarray}\label{obs13}
 	4\cos^2\left(\frac{\pi}{k}\right)+2\cos\left(\frac{\pi}{k}\right)-2>\frac{2}{k}(2k-5).
 	\end{eqnarray}
 	Let $f(k)=4\cos^2\left(\frac{\pi}{k}\right)+2\cos\left(\frac{\pi}{k}\right)+\frac{10}{k}-6$ for any $k\geq 6$. The purpose is to find a lower bound for $f$. 
 	Then 
 	\begin{eqnarray}
 	f'(k)&=&\left(8\pi\cos\left(\frac{\pi}{k}\right)\sin\left(\frac{\pi}{k}\right)+2\pi\sin\left(\frac{\pi}{k}\right)-10\right)\left(\frac{1}{k^2}\right)\nonumber \\
 	&=&\left(4\pi\sin\left(\frac{2\pi}{k}\right)+2\pi\sin\left(\frac{\pi}{k}\right)-10\right)\left(\frac{1}{k^2}\right).\nonumber \
 	\end{eqnarray}
 	It is clear that there  exists an unique $k_0\in [6, +\infty)$ such that $f'(k)\geq 0$ for any $k\in [6, k_0]$ and  $f'(k)<0$ for all $k>k_0$. 
 	On the other hand, it is evident that $f(6)>0$ and 
 	\begin{eqnarray*}
 	\lim\limits_{k\to +\infty} f(k)=0.
 	\end{eqnarray*}
 	Note that we have actually proved that $f(k)>0$ for any $k\geq 6$ which
 	establishes the inequality \eqref{obs13}. 
 	
 	Now we prove that  $\lambda_1^{(s)}>\frac{s}{k}(2k-2s-1).$ Suppose that $\lambda_1^{(s-1)}> \frac{(s-1)}{k}(2k-2(s-1)-1)$.
 	Observe that, by inductive hypothesis,  $\lambda_1^{(s)}=\lambda_1^{(s-1)}+2\cos\left(s\frac{\pi}{k}\right)>\frac{(s-1)}{k}(2k-2(s-1)-1)+2\cos\left(s\frac{\pi}{k}\right)$.
 	Then
 	\begin{align*}
 	\lambda_1^{(s)}&> \frac{s}{k}(2k-2s-1)+\frac{2s}{k}-\frac{1}{k}(2k-2s+1)+2\cos(s\frac{\pi}{k}),
 	\end{align*}
 	or equivalently
 	\begin{align*}
 	\lambda_1^{(s)}&> \frac{s}{k}(2k-2s-1)+\frac{4s}{k}-\frac{2k+1}{k}+2\cos\left(s\frac{\pi}{k}\right).
 	\end{align*}
 	It remains to prove that
 	\begin{equation*}\label{obs13-2-1}
 	\frac{4s}{k}-\frac{2k+1}{k}+2\cos\left(s\frac{\pi}{k}\right)\geq 0
 	\end{equation*}  

	For any $s$ fixed, with $s\geq 3$ and $k\geq 2s+1$, we consider the function $$f(k)=	\frac{4s}{k}-\frac{2k+1}{k}+2\cos\left(s\frac{\pi}{k}\right).$$
 	Then, $f'(k)=\frac{1}{k^2}\left(1-4s+2s\pi\sin\left(\frac{s\pi}{k}\right)\right)$=$\frac{1}{k^2}g(k)$  with $g'(k)=-2\frac{s^2\pi^2}{k^2}\cos\left(\frac{s\pi}{k}\right)<0$ for any $2s+1\leq k$. As, for any $s\geq 3$, it hold that $g(2s+1)\geq 1-4s+2s\pi \sin\left(\frac{3\pi}{7}\right)>0$ and $\lim\limits_{k\to +\infty} g(k)=1-4s$, then  there exists a unique $k_0\geq 2s+1$ such that $g(k_0)=0, g(k)>0$  if $k<k_0$ and $g(k)<0$ for any $k>k_0.$ Thus, we conclude that $f$ is an increasing function on $[2s+1, k_0)$ and a decreasing function on $(k_0, +\infty).$
 	
 		Now, we consider the function
 $$
 	h(s)=f(2s+1)=2\cos\left(\frac{s\pi}{2s+1}\right)-\frac{3}{2s+1}.
 $$Since $h'(s)<0$ and $\lim\limits_{s\to +\infty}h(s)=0,$ we conclude that $f(2s+1)>0$ for any $s\geq 3.$ Finally, as $\lim\limits_{k\to +\infty}f(k)=0$, we conclude that $f(k)>0$ for any $k\geq 2s+1$ which
 completes the proof.

 	Now, we prove that the upper bound holds. According to \eqref{ec: formula lambda s=2}, we have to prove that $4\cos^2(\frac{\pi}{k})+2\cos(\frac{\pi}{k})-2 < \frac{7}{k}(k-\frac{5}{2})$, but this inequality holds since $4\cos^2(\frac{\pi}{k})+2\cos(\frac{\pi}{k})-2\leq 4<7-\frac{35}{2k}.$
 	
 	Suppose that $\lambda_1^{(s-1)}<\frac{7(s-1)}{2k}(k-(s-1)-\frac{1}{2})$, by inductive hyphotesis. 
 	Observe that $\lambda_1^{(s)}=\lambda_1^{(s-1)}+2\cos(s\frac{\pi}{k})<\frac{7(s-1)}{2k}(k-(s-1)-\frac{1}{2})+2\cos(s\frac{\pi}{k})$
 	Then
 	\begin{align*}
 	\lambda_1^{(s)}& <\frac{7s}{2k}\big(k-s-\frac{1}{2}\big)+\frac{7s}{2k}-\frac{7}{2k}\big(k-s+\frac{1}{2}\big)+2\cos(s\frac{\pi}{k}).
 	\end{align*}
 	We have to prove that 
 	\begin{equation}\label{obs13-3-1}
 	\frac{7s}{2k}-\frac{7}{2k}\big(k-s+\frac{1}{2}\big)+2\cos(s\frac{\pi}{k})\leq 0.
 	\end{equation}
 	We remark  that \eqref{obs13-3-1} is equivalent to 
 	\begin{equation*}\label{obs13-3-equiv}
 	2\cos\left(\frac{s\pi}{k}\right)+\frac{7s}{k}-\frac{7}{4k}\leq \frac{7}{2}.
 	\end{equation*}
 	Let for any $k\geq 2s+1$ and $s$ fixed, $f_s(k)=2\cos\left(\frac{s\pi}{k}\right)+\frac{7s}{k}-\frac{7}{4k}.$ 
 	Then 
 	\begin{equation*}\label{obs13-3}
 	f_s'(k)=\left(2s\pi\sin\left(\frac{s\pi}{k}\right)-7s+\frac{7}{4}\right)\left(\frac{1}{k^2}\right)\leq  0, 
 	\end{equation*}
 	if $s\geq 3$. 
Therefore it is sufficient to show  that, for any $k\geq 2s+1$ we have 
 	\begin{equation*}
 f_{s}(2s+1)\leq \frac{7}{2}.
 	\end{equation*}
 	For the sake of clarity, we denote by $g(s)=f_{s}(2s+1)$. Since $g'(s)>0$, then 
 	$$
 	f_{s}(2s+1)=g(s)\leq \lim\limits_{s\to+\infty} g(s)=\frac{7}{2},
 	$$
 	which is the desired conclusion.   
 \end{proof}
\subsection{Generalization of Corollary~\ref{cor:distinctinertia}} 

 In what follows we shall study  particular families of circulant graphs with $n=2k$. Our aim is to prove that  pairs of families of  circulant graphs $G_1$ and $G_2$, defined as in Theorem~\ref{thm: same inertia}, are NCSC. 
  \begin{theorem}\label{lambdas}
For any $n=2k$, $k\ge 6$ and $2\le s\le k-3$, $G_1$ and $G_2$ are  \textbf{NCSC} circulant graphs.
\end{theorem}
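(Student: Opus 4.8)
The plan is to split the two halves of \textbf{NCSC}. Singular cospectrality is free: the sets $S_1,S_2$ here are exactly those of Lemma~\ref{lem: SC n=2k} (with $a_i=i$, so $b_i=k-a_{s-i+1}$ gives $S_2=\{k-s,\dots,k-1,k+1,\dots,k+s\}$), hence $G_1$ and $G_2$ are singularly cospectral and the entire content is to prove \emph{non}-cospectrality. Writing $\lambda_j=\lambda_j^{(s)}$, $\beta_j=\beta_j^{(s)}$, Lemma~\ref{lemma: autovalores dados por Chebyshev} item \eqref{beta igual lambda} gives $\beta_j=(-1)^j\lambda_j$, so the two graphs share every even-indexed eigenvalue, and their spectra coincide if and only if the multiset
\[
B=\{\lambda_j:\ 1\le j\le n-1,\ j\ \text{odd}\}
\]
satisfies $B=-B$ (each value and its opposite occurring with equal multiplicity). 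So it suffices to exhibit a value in $B$ whose opposite is not in $B$.

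My candidate is $\lambda_1$. By Lemma~\ref{lemma: autovalores dados por Chebyshev} item \eqref{simetria lambda}, $\lambda_1=\lambda_{n-1}$ with both indices odd, so $\lambda_1\in B$, and $\lambda_1>0$ by item \eqref{lambdacreciente} (or Proposition~\ref{cota primer autovalor}). The target reduces to showing $-\lambda_1\notin B$, i.e.
\[
\lambda_1+\lambda_j>0\qquad\text{for every odd }j,
\]
for then $\lambda_1$ has positive multiplicity in $B$ while $-\lambda_1$ has multiplicity $0$, giving $B\neq-B$; combined with singular cospectrality this is exactly \textbf{NCSC}.

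Before attacking this inequality I would shrink the range of $s$. Proposition~\ref{lemma: autovalores (s)=(k-s-1)} says the odd-indexed eigenvalues for the parameter $s$ coincide with those for $k-s-1$, so $B$, and hence the property $B\neq -B$, is invariant under the involution $s\mapsto k-s-1$, which maps $\{2,\dots,k-3\}$ onto itself. Since $s=2$ is already settled by Corollary~\ref{cor:distinctinertia}, I may assume $3\le s\le\frac{k-1}{2}$. The payoff is that here $\lambda_1$ is large: Proposition~\ref{cota primer autovalor} with $2s\le k-1$ yields $\lambda_1>\frac{s}{k}(2k-2s-1)\ge s$, and since $\lambda_1=2\sum_{m=1}^{s}\cos(m\pi/k)$ increases with $k$, its value over all admissible $k$ is minimized at $k=2s+1$, where it equals $\csc\!\big(\tfrac{\pi}{2(2s+1)}\big)-1$.

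Finally I would bound the remaining eigenvalues from below. From $\lambda_j=\dfrac{\sin((2s+1)j\pi/n)}{\sin(j\pi/n)}-1$ (Lemma~\ref{lemma: autovalores dados por Chebyshev} item \eqref{lambdaChebyshev}), a value $\lambda_j<-1$ forces $j\pi/n$ into a lobe with $\sin((2s+1)j\pi/n)<0$; the deepest lobe is the first, $\big(\tfrac{\pi}{2s+1},\tfrac{2\pi}{2s+1}\big)$, on which $\sin(j\pi/n)\ge\sin(\pi/(2s+1))$, giving the $k$-uniform estimate $\min_j\lambda_j\ge-1-\csc(\tfrac{\pi}{2s+1})$ (the index $j=k$ can alternatively be cleared by Lemma~\ref{lemma: lambda_k mas lambda_1 positivo}). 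The proof then collapses to the single elementary inequality $\csc(\tfrac{\pi}{2(2s+1)})-\csc(\tfrac{\pi}{2s+1})>2$, equivalently $2\cos(\tfrac{\psi}{2})-1>2\sin\psi$ with $\psi=\pi/(2s+1)$, which holds at $s=3$ and sharpens as $s$ grows. I expect the real difficulty to sit precisely in this last comparison: for small $s$ the lower bound on $\lambda_1$ and the upper bound on $\min_j\lambda_j$ are close, so the trigonometric estimates (especially locating the deepest trough) must be handled carefully rather than through the crude first-zero bound, and it is exactly the reduction $s\le\frac{k-1}{2}$ that keeps $\lambda_1$ on the winning side.
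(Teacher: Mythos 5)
Your proposal is correct, and it shares the paper's overall strategy — singular cospectrality from Lemma~\ref{lem: SC n=2k}, reduction of cospectrality to the odd-indexed eigenvalues via $\beta_j^{(s)}=(-1)^j\lambda_j^{(s)}$, the key inequality $\lambda_1^{(s)}+\lambda_j^{(s)}>0$ for all odd $j$, and the involution $s\mapsto k-s-1$ via Proposition~\ref{lemma: autovalores (s)=(k-s-1)} — but it executes the quantitative core genuinely differently. The paper proves $\lambda_1^{(s)}>\frac{s}{k}(2k-2s-1)$ by induction on $s$ (Proposition~\ref{cota primer autovalor}) and $U_{2s}(Y_j)\ge-\frac{s+1}{2}$ by a delicate critical-point analysis of $U_{2s}'$ (Proposition~\ref{cotainfU2s}), treats $j=k$ separately (Lemma~\ref{lemma: lambda_k mas lambda_1 positivo}), and disposes of $(s,k)=(2,6)$ by a Sage computation. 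You instead use $k$-uniform closed forms: since $\lambda_1^{(s)}=2\sum_{m=1}^s\cos(m\pi/k)$ increases in $k$, its minimum over $k\ge 2s+1$ is the Dirichlet-kernel value $\csc\bigl(\frac{\pi}{2(2s+1)}\bigr)-1$; and from $\lambda_j^{(s)}=\frac{\sin((2s+1)x)}{\sin x}-1$ with $x=j\pi/n$, any negative lobe of $\sin((2s+1)x)$ lies in $\bigl[\frac{\pi}{2s+1},\pi-\frac{\pi}{2s+1}\bigr]$, so $\sin x\ge\sin\frac{\pi}{2s+1}$ and $\min_j\lambda_j^{(s)}\ge-1-\csc\bigl(\frac{\pi}{2s+1}\bigr)$, collapsing everything to $2\cos(\psi/2)-1>2\sin\psi$ for $\psi=\pi/(2s+1)\le\pi/7$, which holds and is monotone since $\frac{d}{d\psi}\bigl(2\cos(\psi/2)-1-2\sin\psi\bigr)=-\sin(\psi/2)-2\cos\psi<0$ there. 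Your trig inequality genuinely fails at $s=2$ (where $2\cos(\pi/10)-1\approx 0.90<1.18\approx 2\sin(\pi/5)$), and you correctly excised that case via Corollary~\ref{cor:distinctinertia}, which through the involution also covers $s=k-3$; this replaces the paper's computer check. What each buys: your route is more elementary and uniform — no induction, no analysis of the extrema of $U_{2s}$, no special handling of $j=k$ (the point $x=\pi/2$ falls under the same lobe bound) — while the paper's polynomial bounds make the positivity margin $\frac{s}{k}(2k-2s-1)-\frac{s+1}{2}-1\ge 0$ transparent and its Proposition~\ref{cotainfU2s} is of independent interest. Two cosmetic points, not gaps: the phrase ``the deepest lobe is the first'' is slightly off (by the symmetry $x\mapsto\pi-x$ the first and last lobes are equally extreme), though your bound only uses $\sin x\ge\sin\frac{\pi}{2s+1}$ on every negative lobe, which is valid; and your multiset formulation (cancelling the common even-indexed part to get ``cospectral iff $B=-B$'') is a clean two-directional sharpening of the paper's one-directional Proposition~\ref{prop: distinct sets}.
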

 
  We begin by showing that the first eigenvalues of  both graphs are of distinct sign. This fact is the key to prove that 
 $G_1$ and  $G_2$ are not cospectral.

\begin{lemma} \label{lemma:propiedades de los autovalores} Let $\lambda_j^{(s)}$ and $\beta_j^{(s)}$, $0\leq j \leq n-1$, be the eigenvalues of $G_1$ and $G_2$ respectively. If	 $2\leq s\leq \frac{k-1}{2}$, then $\lambda_1^{(s)}>0$ and   $\beta_1^{(s)}<0$.
\end{lemma}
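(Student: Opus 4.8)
The plan is to read off both sign claims from results already established: the quantitative lower bound of Proposition~\ref{cota primer autovalor} pins down the sign of $\lambda_1^{(s)}$, and the eigenvalue symmetry $\beta_j^{(s)}=(-1)^j\lambda_j^{(s)}$ then transfers that information to $\beta_1^{(s)}$.

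First I would invoke Proposition~\ref{cota primer autovalor}, which under the standing hypotheses $k\ge 6$, $n=2k$, $S_1=\{1,\ldots,s,n-s,\ldots,n-1\}$ and $2\le s\le\frac{k-1}{2}$ gives
$$
\lambda_1^{(s)}>\frac{s}{k}(2k-2s-1).
$$
Then I would verify that this lower bound is strictly positive: since $s\le\frac{k-1}{2}$ forces $2s+1\le k$, we obtain $2k-2s-1\ge k>0$, and $s/k>0$ as well, so the product is positive. Hence $\lambda_1^{(s)}>0$.

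Second, I would observe that $G_2$ is exactly the graph associated to $G_1$ by the substitution $b_i=k-a_{s-i+1}$ of Lemma~\ref{lem: SC n=2k}: taking $a_i=i$ yields $b_i=k-s+i-1$, whose connection set is precisely $S_2=\{k-s,\ldots,k-1,k+1,\ldots,k+s\}$. Consequently, Lemma~\ref{lemma: autovalores dados por Chebyshev}, item~\eqref{beta igual lambda}, gives $\beta_j^{(s)}=(-1)^j\lambda_j^{(s)}$ for all $0\le j\le n-1$. Specializing to $j=1$ produces $\beta_1^{(s)}=-\lambda_1^{(s)}$, which is negative by the first step.

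The genuine difficulty of this lemma has already been absorbed into Proposition~\ref{cota primer autovalor}, whose inductive estimate on $\lambda_1^{(s)}$ is the delicate ingredient; once that bound is in hand, the only nontrivial point remaining is the elementary check that the restriction $s\le\frac{k-1}{2}$ (rather than the wider range $s\le k-3$ of Theorem~\ref{lambdas}) is exactly what keeps $2k-2s-1$ strictly positive, so that the lower bound does not degenerate. I expect no further obstacle.
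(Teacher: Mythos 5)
Your proposal is correct, but it takes a different route from the paper. For the positivity of $\lambda_1^{(s)}$, the paper does not invoke Proposition~\ref{cota primer autovalor} at all: it simply writes, via Lemma~\ref{lemma: autovalores dados por Chebyshev} item~\eqref{lambdaChebyshev},
$$
\lambda_1^{(s)}=2\sum_{h=1}^s T_h(X_1)=2\sum_{h=1}^s \cos\left(h\frac{\pi}{k}\right),
$$
and observes that the restriction $2\le s\le\frac{k-1}{2}$ forces $\frac{\pi}{k}\le \frac{h\pi}{k}<\frac{\pi}{2}$ for every $1\le h\le s$, so each summand is strictly positive. That is a one-line, self-contained argument needing only the sign of cosines on $\left(0,\frac{\pi}{2}\right)$. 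Your route instead leans on the quantitative lower bound $\lambda_1^{(s)}>\frac{s}{k}(2k-2s-1)$, whose proof is a fairly delicate induction with calculus estimates --- much heavier machinery than the lemma requires, though logically legitimate: Proposition~\ref{cota primer autovalor} precedes this lemma in the paper and its proof does not depend on it, so there is no circularity, and your check that $s\le\frac{k-1}{2}$ gives $2k-2s-1\ge k>0$ is exactly right (it plays the same role as the paper's observation that this restriction keeps all angles below $\frac{\pi}{2}$). Your verification that $G_2$ arises from $G_1$ via $b_i=k-a_{s-i+1}$ with $a_i=i$, hence $\beta_1^{(s)}=-\lambda_1^{(s)}<0$ by Lemma~\ref{lemma: autovalores dados por Chebyshev} item~\eqref{beta igual lambda}, coincides with the paper's second step. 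In short: your proof buys an explicit quantitative bound the lemma does not need, while the paper's buys brevity and independence from the inductive estimate.
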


\begin{proof}

Note that by Lemma \ref{lemma: autovalores dados por Chebyshev} item \eqref{lambdaChebyshev} we get 
$$
\lambda_1^{(s)}=2\sum_{h=1}^s T_h(X_1)=2\sum_{h=1}^s \cos(h\frac{\pi}{k}).
$$ 
Since $\frac{\pi}{k}\leq \frac{h\pi}{k}<\frac{\pi}{2}$, then $\cos(h\frac{\pi}{k})>0$. Hence $\lambda_1^{(s)}>0$ and by Lemma \ref{lemma: autovalores dados por Chebyshev} item \eqref{beta igual lambda}, we conclude that $\beta_1^{(s)}<0$.

\end{proof}

From Lemma \ref{lemma: autovalores dados por Chebyshev}, items \eqref{beta igual lambda} and \eqref{simetria lambda}, we deduce the following result.

\begin{proposition}\label{prop: distinct sets}Let $n=2k$, $k\ge 6$ and  $2 \le s \le k-3$. If  the sets $$\left\{\lambda_1^{(s)},\lambda_3^{(s)},\ldots,\lambda_{2\lfloor\frac{k-1}{2}\rfloor +1}^{(s)}\right\} \   and \ \left\{\beta_1^{(s)},\beta_3^{(s)},\ldots,\beta_{2\lfloor\frac{k-1}{2}\rfloor +1} ^{(s)}\right\},$$ are distinct, then $G_1$ and $G_2$ are not cospectral. 

\end{proposition}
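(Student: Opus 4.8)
The plan is to establish the contrapositive: assuming $G_1$ and $G_2$ are cospectral, I would show that the two sets displayed in the statement necessarily coincide. Throughout I would use the two facts quoted just before the proposition, namely $\beta_j^{(s)}=(-1)^j\lambda_j^{(s)}$ for all $0\le j\le n-1$ (Lemma~\ref{lemma: autovalores dados por Chebyshev}, item~\eqref{beta igual lambda}) and the symmetry $\lambda_j^{(s)}=\lambda_{n-j}^{(s)}$ (item~\eqref{simetria lambda}), the latter holding for the $\beta_j^{(s)}$ as well since $G_2$ is circulant.

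First I would split each spectrum according to the parity of its index. The spectrum of $G_1$ is the multiset union of the even-indexed values $\lambda_0^{(s)},\lambda_2^{(s)},\dots,\lambda_{2k-2}^{(s)}$ and the odd-indexed values $\lambda_1^{(s)},\lambda_3^{(s)},\dots,\lambda_{2k-1}^{(s)}$, and similarly for $G_2$. Since $\beta_{2r}^{(s)}=\lambda_{2r}^{(s)}$ for every $0\le r\le k-1$, the even-indexed parts of the two spectra are \emph{identical} multisets. Because cospectrality means the two full spectra agree as multisets, cancelling this common even-indexed part forces the odd-indexed parts to agree as multisets as well. Passing to the underlying sets of distinct values, the set of distinct odd-indexed eigenvalues of $G_1$ equals that of $G_2$.

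It remains to recognize these sets of distinct values as the two sets in the statement. Here I would use that $n=2k$ is even, so the involution $j\mapsto n-j$ preserves the parity of $j$ and hence permutes the odd indices $\{1,3,\dots,2k-1\}$; a complete set of orbit representatives is given by the odd integers $j$ with $1\le j\le k$, that is, $j\in\{1,3,\dots,2\lfloor\frac{k-1}{2}\rfloor+1\}$ (the top index being $k$ when $k$ is odd and $k-1$ when $k$ is even). By the symmetry $\lambda_j^{(s)}=\lambda_{n-j}^{(s)}$, every odd-indexed eigenvalue of $G_1$ already occurs among $\lambda_1^{(s)},\lambda_3^{(s)},\dots,\lambda_{2\lfloor\frac{k-1}{2}\rfloor+1}^{(s)}$, so this list exhausts the distinct odd-indexed eigenvalues of $G_1$; the same argument applies to $G_2$ with the $\beta_j^{(s)}$. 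Equality of the two sets of distinct odd-indexed eigenvalues therefore yields equality of the sets in the statement, which proves the contrapositive.

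Since the whole argument is index bookkeeping, I do not expect a genuine obstacle; the only points demanding care are the multiset cancellation — one must check that the even-indexed values occur with matching multiplicities in both spectra before cancelling, which is exactly what $\beta_{2r}^{(s)}=\lambda_{2r}^{(s)}$ guarantees — and the verification that $j\mapsto n-j$ leaves the odd indices odd and that $2\lfloor\frac{k-1}{2}\rfloor+1$ is precisely the largest odd integer not exceeding $k$, so that the orbit representatives match the indices appearing in the displayed sets.
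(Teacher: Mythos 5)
Your proof is correct and follows exactly the route the paper indicates: it deduces the proposition from items \eqref{beta igual lambda} and \eqref{simetria lambda} of Lemma~\ref{lemma: autovalores dados por Chebyshev}, cancelling the common even-indexed multiset $\beta_{2r}^{(s)}=\lambda_{2r}^{(s)}$ and using the symmetry $j\mapsto n-j$ (parity-preserving since $n=2k$) to reduce the odd-indexed eigenvalues to the representatives $1,3,\dots,2\lfloor\frac{k-1}{2}\rfloor+1$. Your explicit checks of the multiset cancellation and of $2\lfloor\frac{k-1}{2}\rfloor+1$ being the largest odd integer not exceeding $k$ merely flesh out details the paper leaves implicit.
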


Next, we shall need the following statement. 

\begin{proposition}\label{cotainfU2s}
 Let $n=2k$, $Y_j=\cos(\frac{j\pi}{n})$ and $2\leq j \leq k-1$. Then  $U_{2s}(Y_j)\geq -\frac{s+1}{2}$ for any $s\geq 2.$
\end{proposition}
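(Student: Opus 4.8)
The plan is to convert the statement into an elementary scalar estimate by writing $U_{2s}$ as a ratio of sines and isolating the one region where the bound is delicate. Put $\theta=\frac{j\pi}{n}$; since $n=2k$ and $2\le j\le k-1$ we have $\theta\in(0,\tfrac{\pi}{2})$, so $\sin\theta>0$, and by Lemma~\ref{lemma: prop polinomios de Chebyshev}(5),
\[
U_{2s}(Y_j)=\frac{\sin((2s+1)\theta)}{\sin\theta}.
\]
If $\sin((2s+1)\theta)\ge 0$ the claim is immediate, and if $\sin\theta\ge\frac{2}{s+1}$ then, using $\sin((2s+1)\theta)\ge-1$, we get $U_{2s}(Y_j)\ge-\frac{1}{\sin\theta}\ge-\frac{s+1}{2}$. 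Hence the whole difficulty concentrates in the range where $\sin((2s+1)\theta)<0$ and $\theta<\theta_c:=\arcsin\frac{2}{s+1}$, i.e. $\theta$ small; this is precisely where the crude bound $-1/\sin\theta$ is useless, and one must exploit that there the numerator is only mildly negative while $\sin\theta$ is small.

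On this remaining range I would first check that $(2s+1)\theta$ falls in the first trough interval $(\pi,\tfrac{3\pi}{2})$: since $(2s+1)\theta_c<4<\tfrac{3\pi}{2}$ (granting the scalar estimate below), the sign condition forces $(2s+1)\theta\in(\pi,\tfrac{3\pi}{2})$. Writing $\mu=(2s+1)\theta-\pi\in(0,\mu_c)$ with $\mu_c=(2s+1)\theta_c-\pi$, the target becomes $2\sin\mu\le(s+1)\sin\frac{\pi+\mu}{2s+1}$. I would bound the right side from below using that $t\mapsto\frac{\sin t}{t}$ is decreasing on $(0,\pi]$ together with $0<\frac{\pi+\mu}{2s+1}\le\theta_c$, which gives $(s+1)\sin\frac{\pi+\mu}{2s+1}\ge\frac{2(\pi+\mu)}{\pi+\mu_c}$, and bound the left side by $\sin\mu\le\mu$. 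The inequality then reduces to $\mu(\pi+\mu_c-1)\le\pi$; since $\mu<\mu_c$ and, by the identity $(\pi+1)^2=(\pi-1)^2+4\pi$, the positive root of $t^2+(\pi-1)t-\pi$ is exactly $1$, this follows from $\mu_c\le 1$.

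Thus everything rests on the single scalar inequality $(2s+1)\arcsin\frac{2}{s+1}\le\pi+1$ for $s\ge 2$ (which also yields $\mu_c\le 4-\pi<1$), and I expect this to be the main obstacle, precisely because the proposition is nearly tight: the actual minimum of $U_{2s}$ on $[-1,1]$ is roughly $-0.45\,s$, only a little above $-\frac{s+1}{2}$, so no lossy estimate can be tolerated. Substituting $y=\frac{2}{s+1}\in(0,\tfrac23]$ turns it into $\arcsin y\le\frac{4y}{4-y}$, which I would settle by monotonicity: the function $\phi(y)=\frac{4y}{4-y}-\arcsin y$ vanishes at $0$ and satisfies $\phi'(y)=\frac{16}{(4-y)^2}-\frac{1}{\sqrt{1-y^2}}>0$ on $(0,\tfrac23]$, the last inequality reducing to the polynomial bound $256(1-y^2)>(4-y)^4$ on that interval.
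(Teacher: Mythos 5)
Your proof is correct, and it takes a genuinely different route from the paper's. The paper works through the critical points of $U_{2s}$: using the identity $U_{2s}'(X)=\frac{(2s+1)T_{2s+1}(X)-XU_{2s}(X)}{X^2-1}$, it detects a sign change of $U_{2s}'$ at explicit dyadic values $z_l=\cos\bigl(l\frac{\pi}{2s+1}\bigr)$ with $l\in(1,\frac32)$ (separate cases $s=2$, $s=3$, $s\ge4$), obtains a critical point $z_{l_0}$ where $U_{2s}(z_{l_0})=\frac{(2s+1)T_{2s+1}(z_{l_0})}{z_{l_0}}$, bounds this below by $-\frac{s+1}{2}$ via positivity of $N(l)=(4s+2)\cos(l\pi)+(s+1)\cos\bigl(\frac{l\pi}{2s+1}\bigr)$, and finally invokes the qualitative fact that the magnitudes of the extrema of $U_n$ increase as $|X|$ moves away from $0$ to promote the estimate at that single critical point to all $Y_j$. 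You avoid extrema entirely: from $U_{2s}(\cos\theta)=\frac{\sin((2s+1)\theta)}{\sin\theta}$ you dispose of the regimes $\sin((2s+1)\theta)\ge 0$ and $\sin\theta\ge\frac{2}{s+1}$ trivially, localize the remaining $\theta$ to the first trough via $(2s+1)\theta<4<\frac{3\pi}{2}$, and reduce everything, through the clean chain $2\sin\mu\le 2\mu$, $\frac{\sin t}{t}$ decreasing, and the quadratic $\mu(\pi+\mu_c-1)\le\pi$, to the single scalar inequality $\arcsin y\le\frac{4y}{4-y}$ on $(0,\frac23]$. I checked each reduction and they are sound. Your approach buys a uniform treatment of every $\theta\in(0,\frac{\pi}{2})$ with no case split in $s$, and in particular it does not rely on the paper's informal closing step about which extremum of $U_{2s}$ has largest absolute value (the least rigorous point of the published proof); the paper's method, in exchange, actually locates the near-extremal point, which explains why $\frac{s+1}{2}$ is nearly sharp. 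Two small presentational repairs: the estimate you actually prove is $(2s+1)\arcsin\frac{2}{s+1}\le 4$ (this, not $\le\pi+1$, is what the substitution $y=\frac{2}{s+1}$ turns into $\arcsin y\le\frac{4y}{4-y}$), and it is the stronger form you need both for $(2s+1)\theta_c<\frac{3\pi}{2}$ and for $\mu_c\le 4-\pi<1$, so state it that way; and since $256(1-y^2)-(4-y)^4$ vanishes at $y=0$, the final polynomial bound deserves its one-line verification, e.g.\ $256(1-y^2)-(4-y)^4=y\,(256-352y+16y^2-y^3)>0$ on $(0,\frac23]$ because $256-352y\ge\frac{64}{3}$ and $16y^2-y^3=y^2(16-y)>0$ there.
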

\begin{proof} From Lemma \ref{lemma: prop polinomios de Chebyshev} we have that
\begin{eqnarray}\label{derivadaU2s}
U'_{2s}(X)=\frac{(2s+1)T_{2s+1}(X)-XU_{2s}(X)}{X^2-1}.
\end{eqnarray}

If $z_l=\cos\left(l\frac{\pi}{2s+1}\right)$ then by \eqref{derivadaU2s} and Lemma \ref{lemma: prop polinomios de Chebyshev} we obtain 
\begin{eqnarray}\label{derivadaU2sz}
U'_{2s}(z_l)&=&\frac{(2s+1)T_{2s+1}(z_l)-z_lU_{2s}(z_l)}{z_l^2-1}\nonumber\\
&=&\frac{(2s+1)\cos\left((2s+1)l\frac{\pi}{2s+1}\right)-\cos\left(l\frac{\pi}{2s+1}\right)\frac{\sin\left((2s+1)l\frac{\pi}{2s+1}\right)}{\sin\left(l\frac{\pi}{2s+1}\right)}}{- \sin^2\left(l\frac{\pi}{2s+1}\right)}\nonumber\\
&=& \frac{-(2s+1)\cos(l\pi)}{\sin^2\left(l\frac{\pi}{2s+1}\right)}\left(1-\frac{1}{(2s+1)}\frac{\tan(l\pi)}{\tan\left(l\frac{\pi}{2s+1}\right)}\right)\nonumber \\
&=& \frac{-(2s+1)\cos(l\pi)}{\sin^2\left(l\frac{\pi}{2s+1}\right)}\left(1-\frac{\frac{\tan(l\pi)}{l\pi}}{\frac{\tan\left(l\frac{\pi}{2s+1}\right)}{l\frac{\pi}{2s+1}}} \right).
\end{eqnarray}

For a fixed $l_0\in ( 1, \frac 32)$ we have:

{\bf Claim 1:} The function $g_{l_0}(s)=\frac{-(2s+1)\cos(l_0\pi)}{\sin^2\left(l_0\frac{\pi}{2s+1}\right)}$ is positive if $s\geq 2.$ 

{\bf Claim 2:} If we denote by 
$$
F_{l_0}(s)=1-\frac{\frac{\tan(l_0\pi)}{l_0\pi}}{\frac{\tan\left(l_0\frac{\pi}{2s+1}\right)}{l_0\frac{\pi}{2s+1}}},
$$
we conlude that the sign of $U'_{2s}\left(\cos\left(l_0\frac{\pi}{2s+1}\right)\right)$ is determinated by the sign of $F_{l_0}(s)$. It is easy to prove that  $f(x)=\frac{\tan(x)}{x}$ is a strictly increasing function on $(0, \frac{\pi}{2})$ and so  $F
_{l_0}$ is a decreasing function.

%


We now consider three cases:
\begin{enumerate}
	\item Case 1: If $s=2.$ Let $l_{1, 2}=\frac{185}{128}, l_{2, 2}=\frac{186}{128}\in (1, \frac 32)$, then we have that 
	\begin{equation}
	U'_{4}\left(\cos\left(l_{1, 2}\frac{\pi}{5}\right)\right)>0\quad {\rm and}\quad U'_{4}\left(\cos\left(l_{2, 2}\frac{\pi}{5}\right)\right)<0.
	\end{equation}
	\item  Case 2:  Let $s=3,$ $l_{1, 3}=\frac{184}{128}$ and $l_{2, 3}=\frac{185}{128}\in (1, \frac 32)$, then we have that 
	\begin{equation}
	U'_{6}\left(\cos\left(l_{1, 3}\frac{\pi}{7}\right)\right)>0\quad {\rm and}\quad U'_{6}\left(\cos\left(l_{2, 3}\frac{\pi}{7}\right)\right)<0.
	\end{equation}
	\item Case 3: Finally if $s\geq 4$, we consider $l_{1, 4}=\frac{91}{64}, l_{2, 4}=\frac{92}{64}\in (1, \frac 32)$, by Claim 1 and 2, we 
conclude that 
	\begin{equation}
F_{l_{1, 4}}(4)>0,\: \:F_{l_{2, 4}}(4)<0\qquad {\rm and} \quad \lim\limits_{s\to +\infty} F_{l_{1, 4}}(s)>0.
	\end{equation}
	Then, for any $s\geq 4$ we establish that
	\begin{equation}
	U'_{2s}\left(\cos\left(l_{1,4}\frac{\pi}{2s+1}\right)\right)>0\quad {\rm and}\quad U'_{2s}\left(\cos\left(l_{2, 4}\frac{\pi}{2s+1}\right)\right)<0.
	\end{equation}
\end{enumerate}

From the above conclusion, we have that for any $s\geq 2$ there exists $l_0\in (l_{1, s}, l_{2, s})$   such that $U'_{2s}(z_{l_0})=0.$ Therefore
\begin{equation}
U_{2s}(z_{l_0})=\frac{(2s+1)T_{2s+1}(z_{l_0})}{z_{l_0}}.
\end{equation}

If we prove that 
\begin{equation}\label{ineq1}
\frac{(2s+1)T_{2s+1}(z_{l})}{z_{l}}+\frac{s+1}{2}>0,
\end{equation}
for any $l\in (l_{1, s}, l_{2, s})$, then we obtain 
\begin{equation*}
U_{2s}(z_{l_0})>\frac{-(s+1)}{2}.
\end{equation*}
The left side of \eqref{ineq1} is written as 
\begin{equation}
U_{2s}(z_l)+\frac{s+1}{2}=\frac{(4s+2)\cos(l\pi)+(s+1)\cos\left(\frac{l\pi}{2s+1}\right)}{2\cos\left(\frac{l\pi}{2s+1}\right)},
\end{equation}
because $2\cos\left(\frac{l\pi}{2s+1}\right)>0$, to obtain the inequality \eqref{ineq1} it is enough to show that
\begin{equation}
N(l)=(4s+2)\cos(l\pi)+(s+1)\cos\left(\frac{l\pi}{2s+1}\right)>0.
\end{equation}

The derivate of $N$ is given by
\begin{eqnarray}
N'(l)&=&-\pi(4s+2)\sin(l\pi)-\frac{(s+1)\pi}{2s+1}\sin\left(\frac{l\pi}{2s+1}\right)\nonumber\\
&=&\pi\left[(4s+2)(-\sin(l\pi))-\left(\frac{s+1}{2s+1}\right)\sin\left(\frac{l\pi}{2s+1}\right)\right].\nonumber\
\end{eqnarray}

Using the fact that if $s\geq 2$ and $l\in I=\left(\frac{91}{64}, \frac{93}{64} \right)$ then 
$$
N'(l)>\pi(10(-\sin(l\pi))-1)\geq 0.
$$
Thus, $N$ is a strictly increasing function on $I$ for any $s\geq 2.$ Then
\begin{enumerate}
	\item If $s=2$, as $N\left(\frac{185}{128}\right)>0$ then $N(l)>0$ in $(l_{1,2}, l_{2, 2})$ and by \eqref{ineq1} we conclude that $U_4(z_{l_0})>-\frac{3}{2}.$
	\item If $s=3$, as $N\left(\frac{184}{128}\right)>0$ then $N(l)>0$ in $(l_{1, 3}, l_{2, 3})$ and by \eqref{ineq1} we conclude that $U_6(z_{l_0})>-\frac{4}{2}.$
	\item If $s\geq 4$, as $N\left(\frac{91}{64}\right)>0$ then $N(l)>0$ in $(l_{1, 4}, l_{2, 4})$ and by \eqref{ineq1} we conclude that 
$U_{2s}(z_{l_0})>-\frac{s+1}{2}.$
\end{enumerate}
The extrema of $U_n(X)$ are not in general as readily determined. All that we can say for certain is that the absolute value of the extreme values of $U_n(X)$ increases monotonically as $|X|$ increases away from $0$. 
Therefore, the minimum of $U_{2s}$ with largest absolute value belongs to the interval $(z_2,1)$. Since $z_{l_0} \in \big(\cos(\frac{3}{2}\frac{\pi}{2s+1}),\cos(\frac{\pi}{2s+1})\big)\subset (z_2,1),$  we have that $z_{l_0}$ is the minimum of largest absolute value of $U_{2s}$. Then for all $2\leq j \leq k-1$, we obtain that $U_{2s}(Y_j)\geq U_{2s}(z_{l_0})\geq -\frac{s+1}{2}$.

\end{proof}

\begin{proof}[Proof of Theorem \ref{lambdas}]

	Let $n=2k$ and $2\le s \le \frac{k-1}{2}$. From   Propositions \ref{cota primer autovalor} and  \ref{cotainfU2s},   it is clear that $\lambda_1^{(s)}>\frac{s}{k}(2k-2s-1)$ and, combining with equation \eqref{eq: lambda dados por U} in proof of   Lemma \ref{lemma: autovalores dados por Chebyshev}, it follows immediately that
	$$
	\lambda_j^{(s)}>-\frac{s+1}{2}-1.
	$$
	Combining these inequalities we obtain that
	\begin{equation}\label{sumalambdas}
	\lambda_1^{(s)}+\lambda_j^{(s)}>\frac{s}{k}(2k-2s-1)-\frac{s+1}{2}-1\geq 0,
	\end{equation}
	for any $k\geq 7$, and $2\leq j \leq k-1.$

From Lemma \ref{lemma: lambda_k mas lambda_1 positivo} we have that 
$$
\lambda_1^{(s)}+\lambda_k^{(s)}> 0,
$$
for any $k\geq 6$.

Finally,  for the case $s=2$ and $k=6$ the inequality \eqref{sumalambdas} 	can be showed directly in using Sage.

Now, let $\frac{k-1}{2} < s \le k-3$ and $s'=k-s-1$. Since $2\le s' \le \frac{k-1}{2}$, from Proposition \ref{lemma: autovalores (s)=(k-s-1)}, we have that
$$
\lambda_1^{(s)}+\lambda_{2i+1}^{(s)}= \lambda_1^{(s')}+\lambda_{2i+1}^{(s')} > 0,
$$
for any $1 \le i \le \lfloor\frac{k-1}{2}\rfloor$.

In conclusion, we proved that 
\begin{equation}\label{desigualdad}
\lambda_1^{(s)}+\lambda_{2i+1}^{(s)}> 0,
\end{equation}
for any $1 \le i \le \lfloor\frac{k-1}{2}\rfloor$, and $2\le s \le k-3$.

From Lemma \ref{lemma: autovalores dados por Chebyshev} item \eqref{beta igual lambda}, we have that $\lambda_{2i+1}^{(s)} = - \beta_{2i+1}^{(s)}$ for any $0 \le i \le \lfloor\frac{k-1}{2}\rfloor$, and $2\le s \le k-3$. Combining this fact with inequality \eqref{desigualdad} and Lemma \ref{lemma:propiedades de los autovalores}, we show that $\lambda_1^{(s)}\neq \beta_{2i+1}^{(s)}$ with $0\leq i \leq \lfloor \frac{k-1}{2}\rfloor$. Therefore,  the sets 
$$
\left\{\lambda_1^{(s)},\lambda_3^{(s)},\ldots,\lambda_{2\lfloor\frac{k-1}{2}\rfloor +1}^{(s)}\right\} \ {\rm and} \ \left\{\beta_1^{(s)},\beta_3^{(s)},\ldots,\beta_{2\lfloor\frac{ k-1}{2}\rfloor +1} ^{(s)}\right\},
$$
are distinct. Finally, according to Proposition \ref{prop: distinct sets}, we conclude that $G_1$ and $G_2$ are not cospectral. 

\end{proof}

\section{Circulant graphs with an odd prime number of vertices}\label{sec: prime-case}
In this section we prove that for circulant graphs with an odd number of vertices, to be singularly cospectral implies to be isomorphic. We recall firstly
 the following  statement due to Turner.

\begin{theorem}\cite{ElpasandTurner1970}\label{thm: circulant matrices with the same eigenvalues}
	Let $A$ and $B$ be n $\times$ n circulant matrices with rational entries, where $n$ is an odd  prime integer number.  If $A$ and $B$ have the same eigenvalues, then $A$ and $B$ are permutationally similar. That is, there  exists a permutation matrix $P$ such that $B= P^{-1}AP$. 
	Moreover, $P$ can be selected in the special form $P=(P_{qi,j})$ where $q$ is an integer $1\le q \le n- 1$, and $P_{qi,j}= 1$ if $qi\equiv j \ (\textrm{mod } p)$ and $P_{qi,j}=0$ otherwise.
\end{theorem}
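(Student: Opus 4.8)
The plan is to exploit the \emph{representer polynomial} of a circulant matrix together with the Galois theory of the cyclotomic field $\mathbb{Q}(\omega)$, where $\omega=\exp(\frac{2\pi i}{p})$ and $p=n$ is the given odd prime; throughout I interpret ``same eigenvalues'' as equality of the two spectra \emph{counted with multiplicity}. Writing the first rows of $A$ and $B$ as $(a_0,\dots,a_{p-1})$ and $(b_0,\dots,b_{p-1})$, identity \eqref{formula autovalores} gives $\lambda_j=P_A(\omega^j)$ and $\mu_j=P_B(\omega^j)$, where $P_A(x)=\sum_{h=0}^{p-1}a_hx^h$ and $P_B$ is defined analogously. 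Since $p$ is prime, $\omega,\omega^2,\dots,\omega^{p-1}$ are exactly the primitive $p$-th roots of unity, and $G=\mathrm{Gal}(\mathbb{Q}(\omega)/\mathbb{Q})\cong(\mathbb{Z}/p\mathbb{Z})^{\times}$ acts by $\sigma_a(\omega)=\omega^a$. Because $A$ has rational entries, $\sigma_a(\lambda_1)=\sigma_a(P_A(\omega))=P_A(\omega^a)=\lambda_a$, so the multiset $\{\lambda_1,\dots,\lambda_{p-1}\}$ is precisely the Galois orbit $O_\alpha:=\{\sigma_a(\alpha):a\in G\}$ of $\alpha:=P_A(\omega)$, and likewise $\{\mu_1,\dots,\mu_{p-1}\}=O_\beta$ for $\beta:=P_B(\omega)$. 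This is the structural observation on which everything rests.

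Next I would extract two numerical consequences from the hypothesis. Equality of traces gives $p\,a_0=\sum_j\lambda_j=\sum_j\mu_j=p\,b_0$, hence $a_0=b_0$; this innocuous identity is used only at the very end. More importantly, I would count \emph{rational} eigenvalues. The value $\lambda_0=\sum_h a_h$ is rational, whereas a member of $O_\alpha$ is rational if and only if $\alpha\in\mathbb{Q}$ (a Galois conjugate of $\alpha$ lies in $\mathbb{Q}$ only when $\alpha$ itself does). Thus $A$ has exactly one rational eigenvalue if $\alpha\notin\mathbb{Q}$ and all $p$ of them rational if $\alpha\in\mathbb{Q}$, and similarly for $B$. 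Comparing these counts forces $\alpha\in\mathbb{Q}\iff\beta\in\mathbb{Q}$.

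The degenerate case $\alpha\in\mathbb{Q}$ means $a_1=\cdots=a_{p-1}$ (the off-diagonal entries are constant), and likewise $b_1=\cdots=b_{p-1}$; a direct comparison of the two resulting multisets, each consisting of one eigenvalue of multiplicity $1$ and one of multiplicity $p-1$, together with $a_0=b_0$, forces $A=B$, so $q=1$ works. In the main case $\alpha,\beta\notin\mathbb{Q}$, the rationals $\lambda_0$ and $\mu_0$ are the \emph{unique} rational eigenvalues of $A$ and $B$, so the multiset identity splits as $\lambda_0=\mu_0$ and $O_\alpha=O_\beta$. In particular $\beta=\mu_1\in O_\beta=O_\alpha$, so $\beta=\sigma_q(\alpha)=P_A(\omega^q)$ for some $q\in\{1,\dots,p-1\}$.

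Finally I would translate this back to the matrices. Reindexing $m\equiv qh$, one has $P_A(\omega^q)=\sum_m a_{q^{-1}m}\,\omega^m$, so $\sum_{m=0}^{p-1}(b_m-a_{q^{-1}m})\,\omega^m=0$ with rational coefficients. Using $\{1,\omega,\dots,\omega^{p-2}\}$ as a $\mathbb{Q}$-basis of $\mathbb{Q}(\omega)$ and the relation $1+\omega+\cdots+\omega^{p-1}=0$, every rational vanishing combination of $1,\omega,\dots,\omega^{p-1}$ has all coefficients equal; hence $b_m=a_{q^{-1}m}+t$ for a single constant $t$, and setting $m=0$ with $a_0=b_0$ gives $t=0$. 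Therefore $b_m=a_{q^{-1}m}$ for all $m$, which is exactly the first row of $P^{-1}AP$ for the permutation $P$ attached to $i\mapsto qi$, i.e. $P_{qi,j}=1$ iff $qi\equiv j\ (\mathrm{mod}\ p)$. I expect the main obstacle to be the middle step: isolating $\lambda_0$ as the unique rational eigenvalue through the Galois-orbit description and thereby pinning down $q$, since once $O_\alpha=O_\beta$ is established the reduction to $b_m=a_{q^{-1}m}$ is routine linear algebra over $\mathbb{Q}(\omega)$.
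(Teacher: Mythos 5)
The paper does not prove this statement at all: it is imported as a black box from Elspas and Turner \cite{ElpasandTurner1970}, so there is no internal proof to compare yours against. Judged on its own merits, your argument is correct and complete at the sketch level, and it is in substance the classical proof from that reference: write the eigenvalues as values $P_A(\omega^j)$ of the representer polynomial; use rationality of the entries so that the Galois group of $\mathbb{Q}(\omega)/\mathbb{Q}$ realizes $\{\lambda_1,\dots,\lambda_{p-1}\}$ as the orbit of $\alpha=P_A(\omega)$; match the spectra to get $P_B(\omega)=P_A(\omega^q)$ for some $q$; and finish with the fact that the only rational vanishing combinations of $1,\omega,\dots,\omega^{p-1}$ have all coefficients equal --- which is exactly where primality of $n=p$ enters, via the irreducibility of $1+x+\cdots+x^{p-1}$ over $\mathbb{Q}$ (Elspas--Turner phrase this through the cyclotomic polynomial rather than explicitly through the Galois action, but it is the same mechanism). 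I verified the endgame: $b_m=a_{q^{-1}m}$ for all $m$ is precisely the first row of $P^{-1}AP$ for the multiplier permutation $i\mapsto qi$, and the trace identity $a_0=b_0$ correctly kills the additive constant $t$. Two cosmetic points only: (i) in the degenerate case $a_1=\cdots=a_{p-1}$, if $a_1=0$ the spectrum is a single value of multiplicity $p$ rather than the $(1,p-1)$ split you describe, but the conclusion $A=B$ follows just as quickly there, since all eigenvalues of $B$ must then coincide, forcing $b_1=0$ and $b_0=a_0$; (ii) your exponent $q$ is the inverse modulo $p$ of the one appearing in the theorem's normal form, which is harmless because $q$ ranges over all residues $1\le q\le p-1$.
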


\begin{remark}\label{rmk: preserv adjacency matrix}
	If $A$ is a circulant adjacency matrix of a circulant graph $G$ of  prime order, and $P$ is defined as in Theorem~\ref{thm: circulant matrices with the same eigenvalues}, then $P^{-1}AP$ is a circulant adjacency matrix of $G$.
	\end{remark}
%
\begin{lemma}~\label{lem: square of the adjacency matrix}
	If $H$ is a circulant graph with  a positive integer $n$, then $H$ is isomorphic to the circulant graph $G(\Z_n,S_H)$, where $S_H=\{x:2x=j\mbox{ and }\,(A_H^2)_{0, j}\mbox{ is  odd}\}$ (sums should be considered modulo $n$) and $A_H$ is a circulant adjacency matrix of $H$. 
\end{lemma}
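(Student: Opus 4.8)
The plan is to recover the connection set of $H$ directly from the parity pattern of the entries of $A_H^2$, and to show that $S_H$ coincides with the connection set $S$ determined by the circulant representation $A_H$; this gives $G(\Z_n,S_H)=H$, so that the asserted isomorphism can be taken to be the identity map on $\Z_n$. Write $c_t:=(A_H)_{0,t}\in\{0,1\}$ for the entries of the first row of $A_H$, so that $c_0=0$ (no loops), $c_t=c_{n-t}$ (symmetry), and $S=\{t:c_t=1\}$. Using the circulant identity $(A_H)_{p,q}=c_{\,q-p}$ (indices modulo $n$), the first step is to record
\[
(A_H^2)_{0,j}=\sum_{k=0}^{n-1}(A_H)_{0,k}(A_H)_{k,j}=\sum_{k=0}^{n-1}c_k\,c_{\,j-k},
\]
which is precisely the number of walks of length two in $H$ from vertex $0$ to vertex $j$.

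The core of the argument is a parity computation on this convolution. Consider the involution $\iota_j\colon k\mapsto j-k$ of $\Z_n$. Each summand is $\iota_j$-invariant, since $c_{\,j-(j-k)}\,c_{\,j-k}=c_{\,j-k}\,c_k=c_k\,c_{\,j-k}$; hence the two summands attached to any nontrivial orbit $\{k,\,j-k\}$ are equal and together contribute an even amount. Reducing modulo $2$, only the fixed points of $\iota_j$ survive, and a fixed point is a $k$ with $2k=j$, at which the summand equals $c_k\,c_k=c_k$ (using $c_k\in\{0,1\}$). This yields the key congruence
\[
(A_H^2)_{0,j}\equiv\sum_{2k=j}c_k\pmod 2 .
\]
Specializing $j=2x$, the equation $2k=j$ is solved by the diagonal index $k=x$, whose summand is $c_x$; isolating this contribution gives the target congruence $(A_H^2)_{0,2x}\equiv c_x\pmod 2$. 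Consequently $(A_H^2)_{0,2x}$ is odd exactly when $x\in S$, so that $S_H=\{x:(A_H^2)_{0,2x}\text{ is odd}\}=\{t:c_t=1\}=S$, whence $H=G(\Z_n,S)=G(\Z_n,S_H)$.

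The step I expect to be the main obstacle is precisely this parity bookkeeping: verifying that every nontrivial $\iota_j$-orbit contributes an even number to the convolution, and that after setting $j=2x$ the surviving fixed-point contribution reduces to the single value $c_x$. Once the congruence $(A_H^2)_{0,2x}\equiv c_x\pmod 2$ is in hand, the lemma is immediate. Two remarks justify the wording of the statement. First, the conclusion is phrased up to isomorphism rather than as an equality because the circulant adjacency matrix $A_H$ --- equivalently, the labelling of $V(H)$ by $\Z_n$ --- is not unique: a different circulant representation yields a connection set equivalent to $S$ under an automorphism of $\Z_n$, and the induced Cayley-graph isomorphism carries $G(\Z_n,S_H)$ onto $H$. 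Second, $c_0=0$ forces $0\notin S_H$ and $c_t=c_{n-t}$ forces $S_H=-S_H$, so $G(\Z_n,S_H)$ is a genuine loopless undirected circulant graph, as required.
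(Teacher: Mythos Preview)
Your argument follows essentially the same route as the paper's: write $(A_H^2)_{0,j}$ as the convolution $\sum_k c_k c_{j-k}$, pair the terms under the involution $k\leftrightarrow j-k$ so that only the fixed points survive modulo $2$, and then read off the connection set from the resulting parity pattern. The involution language makes the pairing step a bit crisper than the paper's prose, but the underlying idea is identical.

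There is, however, a genuine gap at the step where you pass from the (correct) congruence $(A_H^2)_{0,j}\equiv\sum_{2k=j}c_k\pmod 2$ to the claimed $(A_H^2)_{0,2x}\equiv c_x\pmod 2$. This requires $k=x$ to be the \emph{only} solution of $2k=2x$ in $\Z_n$, which holds precisely when $n$ is odd. For even $n$ there is a second fixed point $k=x+n/2$, the congruence becomes $(A_H^2)_{0,2x}\equiv c_x+c_{x+n/2}\pmod 2$, and your identification $S_H=S$ collapses. In fact the lemma as stated is false for even $n$: with $n=4$ and $S=\{1,3\}$ (so $H\cong C_4$) one has $A_H^2$ with first row $(2,0,2,0)$, hence $S_H=\emptyset$ and $G(\Z_4,S_H)$ is edgeless. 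The paper's own proof shares this oversight, and the lemma is only ever invoked downstream with $n$ an odd prime, where your argument is completely valid. The fix is simply to add the hypothesis that $n$ is odd (equivalently, to record that doubling is a bijection on $\Z_n$, so the fixed-point sum has exactly one term).
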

\begin{proof}
	Let $H$ be a circulant graph and let $v_0,\ldots,v_{n-1}$ an order of its vertices such that the adjacency matrix $A_H$ of $H$ indexed by such an order is a circulant matrix. Let $S$ be the set of indices $j$ such that $(A_H)_{0,j}=1$, meaning that $H$ is isomorphic to $G(\Z_n,S)$. Recall that $(A_H^2)_{i,j}$ equals the number of walks of length two between $v_i$ and $v_j$ for every $0\le i,j\le n-1$. Notice that $(A_H^2)_{0,j}$ is equal to the number of ordered pairs $(x,y)\in S\times S$ such that $x+y=j$, as $H$ is a circulant graph, and where the sums should be considered modulo $n$. Thus $(A_H^2)_{0,j}$ is even, unless $2x=j$ for some $x\in S$ ($v_x$ is adjacent to $v_0$). Consequently, given $A_H^2$, the set $S_H$ of neighbors of $v_0$ are precisely those $v_x$ such that $2x=j$ and $(A_H^2)_{0,j}$ is odd. Therefore, $G(\Z_n,S_H)$ is isomorphic to $H$.
\end{proof}
We can now prove the main result of this section.
\begin{theorem}
	Let $F$ and $H$ be circulant graphs both with an odd prime number of vertices. If $F$ and $H$ are singularly cospectral, then $F$ and $H$ are isomorphic.
\end{theorem}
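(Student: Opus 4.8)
The plan is to reduce singular cospectrality of $F$ and $H$ to ordinary cospectrality of the \emph{squared} adjacency matrices, and then to combine Theorem~\ref{thm: circulant matrices with the same eigenvalues} with Lemma~\ref{lem: square of the adjacency matrix}, which recovers a circulant graph from the square of its adjacency matrix. The point is that squaring forgets the signs of the eigenvalues, which is exactly the information that singular cospectrality (unlike cospectrality) may fail to preserve.

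First I would record that, since $F$ and $H$ share the same number $n$ of vertices, singular cospectrality forces $A_F^2$ and $A_H^2$ to have the same spectrum. Indeed, the adjacency matrices are symmetric, so their eigenvalues are real, and squaring sends each eigenvalue $\lambda$ to $\lambda^2=|\lambda|^2$; hence the multisets of nonzero eigenvalues of $A_F^2$ and $A_H^2$ coincide precisely because the multisets $\{|\lambda|:\lambda\neq 0\}$ do. Moreover, equality of the number of nonzero eigenvalues together with $|V(F)|=|V(H)|=n$ forces the multiplicity of $0$ to agree as well, so $A_F^2$ and $A_H^2$ are cospectral.

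Next I would observe that $A_F^2$ and $A_H^2$ are circulant matrices (products of circulant matrices are circulant) with nonnegative integer, hence rational, entries, so Theorem~\ref{thm: circulant matrices with the same eigenvalues} applies with $n$ an odd prime: there is an integer $q$ with $1\le q\le n-1$, a unit modulo $n$, whose associated permutation $P$ (multiplication by $q$ on $\Z_n$) satisfies $A_H^2=P^{-1}A_F^2P$. Writing $c$ and $d$ for the first rows of $A_H^2$ and $A_F^2$ and using that both matrices are circulant, the conjugation by the group automorphism $x\mapsto qx$ relates their first rows by $d_{qm}=c_m$ for every $m\in\Z_n$ (for the appropriate choice between $q$ and $q^{-1}$). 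Since $n$ is odd, $2$ is invertible modulo $n$, so the sets produced by Lemma~\ref{lem: square of the adjacency matrix}, namely $S_H=\{2^{-1}j:\ c_j\text{ odd}\}$ and $S_F=\{2^{-1}j:\ d_j\text{ odd}\}$, satisfy $S_F=qS_H$.

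Finally I would conclude by noting that multiplication by the unit $q$ is a group automorphism $\psi$ of $\Z_n$, and any group automorphism induces a graph isomorphism $G(\Z_n,S)\cong G(\Z_n,\psi(S))$, because $j-i\in S$ if and only if $\psi(j)-\psi(i)=\psi(j-i)\in\psi(S)$. Hence $G(\Z_n,S_H)\cong G(\Z_n,S_F)$, and since Lemma~\ref{lem: square of the adjacency matrix} gives $H\cong G(\Z_n,S_H)$ and $F\cong G(\Z_n,S_F)$, we obtain $F\cong H$. I expect the main obstacle to be the bookkeeping in the third step: tracking how the special shape of $P$ in Theorem~\ref{thm: circulant matrices with the same eigenvalues} transports through the squaring-and-recovering construction of Lemma~\ref{lem: square of the adjacency matrix} to the clean relation $S_F=qS_H$. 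The conceptual core that makes the argument work, namely that passing to squares converts singular cospectrality into genuine cospectrality and thereby brings Turner's theorem into range, is straightforward once the vertex-count observation pins down the multiplicity of the zero eigenvalue.
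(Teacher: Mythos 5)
Your proposal is correct and follows essentially the same route as the paper: reduce singular cospectrality to cospectrality of $A_F^2$ and $A_H^2$, apply Turner's theorem to these circulant squares, and recover the connection sets via the parity lemma. The only (cosmetic) difference is in the bookkeeping: the paper conjugates $A_H$ itself, writing $A_F^2=(P^{-1}A_HP)^2$ and invoking Remark~\ref{rmk: preserv adjacency matrix} so that the recovered sets coincide outright ($S_F=S_H$), whereas you recover both sets from the unconjugated squares and then absorb the multiplier via $S_F=qS_H$ together with the fact that multiplication by a unit of $\Z_n$ induces a graph isomorphism --- both steps are valid.
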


\begin{proof}
	Let $A_H$ and $A_F$ circulant adjacency matrices of the circulant graphs $F$ and $H$. If $F$ and $H$ are singularly cospectral circulant graphs, then the circulant matrices $A_F^2$ and $A_H^2$ have the same eigenvalues. It is well-known that by multiplying two circulant matrices is obtained a circulant matrix. Thus,  $A_F^2=P^{-1}A_H^2P=(P^{-1}A_HP)^2$ for a permutation matrix $P$ defined as in Theorem~\ref{thm: circulant matrices with the same eigenvalues}. Besides, $\hat{A}_H:=P^{-1}A_HP$ is a circulant adjacency matrix for $H$ (see Remark~\ref{rmk: preserv adjacency matrix}). Consequently, if $S_F$ and $S_H$ are defined as in the proof of Lemma~\ref{lem: square of the adjacency matrix}, for $A_F$ and $\hat{A}_H$, then $F$ and $H$ are isomorphic to $G(\Z_p,S_F)$ and $G(\Z_p,S_H)$, respectively. Therefore, by Lemma~\ref{lem: square of the adjacency matrix}, $F$ and $H$ are isomorphic, as $A_F^2=\hat{A}_H^2$ and thus $S_F=S_H$.
\end{proof}

\bibliographystyle{abbrv}
\bibliography{bibliografia_cdgp}

\begin{thebibliography}{10}

\bibitem{biggs93}
N.~Biggs.
\newblock {\em Algebraic graph theory}.
\newblock Cambridge Mathematical Library. Cambridge University Press,
  Cambridge, second edition, 1993.

\bibitem{Blackburn2008}
S.~R. Blackburn and I.~E. Shparlinski.
\newblock On the average energy of circulant graphs.
\newblock {\em Linear Algebra Appl.}, 428(8-9):1956--1963, 2008.

\bibitem{CondeLAMA}
D.~E. Conde, C.~M and L.~Grippo.
\newblock Finding singularly cospectral graphs.
\newblock {\em Linear Multilinear Algebra}, Accepted.

\bibitem{ElpasandTurner1970}
B.~Elspas and J.~Turner.
\newblock Graphs with circulant adjacency matrices.
\newblock {\em J. Combinatorial Theory}, 9:297--307, 1970.

\bibitem{Gutman2001}
I.~Gutman.
\newblock The energy of a graph: old and new results.
\newblock In {\em Algebraic combinatorics and applications ({G}\"{o}\ss
  weinstein, 1999)}, pages 196--211. Springer, Berlin, 2001.

\bibitem{MasonHandscomb}
J.~C. Mason and D.~C. Handscomb.
\newblock {\em Chebyshev polynomials}.
\newblock Chapman \& Hall/CRC, Boca Raton, FL, 2003.

\bibitem{Mankhova2012}
E.~A. Monakhova.
\newblock A survey on undirected circulant graphs.
\newblock {\em Discrete Math. Algorithms Appl.}, 4(1):1250002, 30, 2012.

\bibitem{SanderandSander2015}
J.~W. Sander and T.~Sander.
\newblock On {S}o's conjecture for integral circulant graphs.
\newblock {\em Appl. Anal. Discrete Math.}, 9(1):59--72, 2015.

\bibitem{SO2006153}
W.~So.
\newblock Integral circulant graphs.
\newblock {\em Discrete Mathematics}, 306(1):153--158, 2006.

\bibitem{STEVANOVIC2005}
D.~Stevanovi\'{c} and I.~Stankovi\'{c}.
\newblock Remarks on hyperenergetic circulant graphs.
\newblock {\em Linear Algebra and its Applications}, 400:345--348, 2005.

\end{thebibliography}


\end{document}